\theoremstyle{plain}
\newcommand{\La}{\mathcal{L}}
\newcommand{\Ln}{\langle}
\newcommand{\Rn}{\rangle}
\theoremstyle{plain}
   \newtheorem{theorem}[subsection]{Theorem}
   \newtheorem{remark}[subsection]{Remark}
   \newtheorem{definition}[subsection]{Definition}
   \newtheorem{example}[subsection]{Example}
   \newtheorem{lemma}[subsection]{Lemma}
   \newtheorem{corollary}[subsection]{Corollary}
\begin{document}

\author{Abhishek Ghosh}
\address{Abhishek Ghosh
\endgraf TIFR-Centre of Applicable Mathematics, Bangalore-560065, India
 \endgraf And
\endgraf Department of Mathematics: Analysis, Logic and Discrete Mathematics
	\endgraf Ghent University
	\endgraf Krijgslaan 281, Building S8,	B 9000 Ghent,
	Belgium.}
\email{abhi21@tifrbng.res.in; abhi170791@gmail.com}

\author{Michael Ruzhansky}
\address{Michael Ruzhansky
\endgraf Department of Mathematics: Analysis, Logic and Discrete Mathematics
	\endgraf Ghent University
	\endgraf Krijgslaan 281, Building S8,	B 9000 Ghent,
	Belgium\\
 \endgraf And
 \endgraf School of Mathematical Sciences
\endgraf Queen Mary University of London
\endgraf United Kingdom.}
\email{michael.ruzhansky@ugent.be}

\subjclass{ 42B15; 42B20; 42B25; 43A22 (primary)}

\thanks{The second author is supported by the FWO Odysseus 1 grant G.0H94.18N: Analysis
and Partial Differential Equations, the Methusalem programme of the Ghent University Special Research Fund (BOF) (Grant number 01M01021). MR is also supported
by EPSRC grant EP/R003025/2 and FWO Senior Research Grant G01152}


\title[oscillating multipliers on stratified groups]{Sparse bounds for oscillating multipliers on stratified groups}

\maketitle

\begin{abstract} 
In this article, we address sparse bounds for a class of spectral multipliers that include oscillating multipliers on stratified Lie groups. Our results can be applied to obtain weighted bounds for general Riesz means and for solutions of dispersive equations.
\end{abstract}

\section{Introduction}
On $\mathbb{R}^n,$ operators of the form $\widehat{Tf}(\xi)=m_{\theta, \beta}(\xi) \widehat{f}(\xi),$ where $m_{\theta, \beta}=|\xi|^{-\frac{\theta\beta}{2}} e^{i|\xi|^{\theta}}\chi_{\{|\xi|>1\}}$ are known as oscillating multipliers. They are extensively studied starting with the pioneering works of Hardy, Hirschman \cite{Hirsch} and Wainger \cite{W}. Charles Fefferman proved the crucial weak type $(1, 1)$ estimates in \cite{F}, and  the sharp range for $L^p$ estimates was obtained by Fefferman and Stein in \cite{FS1}. They were also studied by H\"ormander \cite{H}. We also refer to the articles \cite{M, M1, P} for results in the context of the wave operators, that is, $\theta=1$. Weighted estimates for oscillating multipliers on $\mathbb{R}^n$ were initiated by Chanillo \cite{Chan} and extended in \cite{CKW1}. In \cite{CKW2}, weighted end-point estimates were obtained by Chanillo, Kurtz, and Sampson. In this article, we shall confine ourselves to weighted $L^p$ estimates for these operators on stratified Lie groups. In order to do that, let us recall the following preliminaries.   

Let ${\mathfrak g}$ be a $d$-dimensional, graded nilpotent Lie algebra so that 
$$
{\mathfrak g} \ = \ \bigoplus\limits_{i=1}^s \, {\mathfrak g}_i  
$$
as a vector space and $[{\mathfrak g}_i, {\mathfrak g}_j] \subset {\mathfrak g}_{i+j}$ for
all $i,j$. Suppose that ${\mathfrak g}_1$ generates ${\mathfrak g}$ as a Lie algebra. The associated, connected, simply connected Lie group $G$ is called a stratified Lie group. The homogeneous dimension of $G$ is defined as $Q \ = \ \sum_j j \, {\rm dim}({\mathfrak g}_j).$ Consider the sublaplacian ${\mathcal L} = - \sum_k X_k^2$ on $G$, where $\{X_k\}$ is a basis for ${\mathfrak g}_1$. For
any Borel measurable function $m$ on ${\mathbb R}_{+} = [0,\infty)$, 
we can define the spectral multiplier operator
$$
m(\sqrt{{\mathcal L}}) \ = \ \int_0^{\infty} m(\lambda) \, d E_{\lambda}
$$
where $\{E_{\lambda}\}_{\lambda\ge 0}$ is the spectral resolution of $\sqrt{{\mathcal L}}$. Since the exponential map is a global diffeomorphism, the measure on $G$ can be identified with the $d$-dimensional Lebesgue measure. In this setting, analogue of the classical H\"ormander-Mikhlin multiplier theorem was established in the seminal work by Christ in \cite{C}, also fundamental end-point estimates were obtained by Mauceri--Meda in \cite{MM}, see also \cite{MSeeger, MS, MS1} for other influential works.  In recent times, there are many important works in this context, we refer \cite{Bramati, Bui, Bui1, BuiIMRN, Bui2, CW, alessio-joint, plms, gafa, sikora}. We are inspired by the recent work \cite{CW} where the authors have introduced a general class of multipliers covering oscillating multipliers and obtained important end-point estimates. On more general graded groups Fourier multiplier operators are studied in \cite{Du1, Du2, FR, FR1,HR} and references therein. Throughout this article, for any Borel measurable set $R$ and $1\leq p<\infty,$ $\Ln f \Rn_{p, R}$ denotes $(\frac{1}{|R|}\int_{R}|f|^p)^{1/p}.$ Also, a family of sets $\mathcal{S}$ is called $\eta-$sparse if for each $R\in \mathcal{S}$ there exists $E_{R}\subset R$ such that $|E_R|\geq \eta |R|$ and $\{E_R\}_{\mathcal{S}}$ are pairwise disjoint. Now we state our main result.

\subsection*{Statement of main results}
Motivated by \cite{CW}, we introduce the following class of multipliers. Let $\nu>1$ be a number which will be specified later. Let $\phi$ be a smooth function on $(0, \infty),$ supported
on $\{\nu^{-1}\leq \lambda\leq \nu\}$ and satisfying $\sum_{j} \phi(\nu^{-j}\lambda)=1$ for all $\lambda>0.$ Define $m^j(\lambda) := m(\nu^j \lambda) \phi(\lambda).$
\begin{definition}
Let $\theta\in \mathbb{R}\setminus\{0\}$ and $\beta\geq 0.$ We say $m\in \mathscr{M}(\theta, \beta)$ if $m$ is supported in the set $\{\lambda\in \mathbb R_{+}: \lambda^{\theta}\geq 1\}$ and 
\begin{align}
&\sup_{j \theta>0} \nu^{j\theta\beta/2}\|m^{j}\|_{L^{\infty}(\mathbb R_{+})}<\infty,\\
\text{and}\ \ &\sup_{j \theta>0} \nu^{-j\theta(2s-\beta)/2}\|m^{j}\|_{L^2_{s}(\mathbb R_{+})}<\infty\ \ \ \text{for all}\ \  s\in \mathbb{N}. 
\end{align}
\end{definition}
\begin{example}
Let $\theta\in \mathbb{R}\setminus \{0\}.$ Define
$ m_{\theta, \beta}(\lambda):=e^{i\lambda^{\theta}}{\lambda^{-\frac{\theta \beta}{2}}}\chi_{\{\lambda\in [0, \infty): 
 \lambda^{\theta}\geq 1\}}.$
Then it is easy to see that $m_{\theta, \beta}\in \mathscr{M}(\theta, \beta).$
\end{example}

Now we state our main sparse domination principle for the multiplier class $\mathscr{M}(\theta, \beta).$
\begin{theorem}
\label{mainthm-1}
 Let $\theta\in \mathbb{R}\setminus \{0\}$ and $\beta\geq 0,$ and $m\in \mathscr{M}(\theta, \beta).$ Then there exist sparse families $\mathcal{S}$ and $\mathcal{S}'$ such that for all compactly supported bounded functions $f, g$ we have
 \begin{align*}
 |\langle m(\sqrt{\La})f, g \rangle |\lesssim_{\theta, \beta, r_{1}, r_{2}}\sum_{R\in \mathcal{S}} |R|\langle f \rangle_{r_1, R}\Ln g\Rn_{r_2', R}\\
 \text{and}\,\, |\langle m(\sqrt{\La})f, g \rangle |\lesssim_{\theta, \beta, r_{1}, r_{2}}\sum_{R\in \mathcal{S'}} |R|\langle f \rangle_{r_2', R}\Ln g\Rn_{r_1, R}, 
 \end{align*}
 where $r_1, r_2$ satisfy 
 \begin{align}
 \ \ \ &\left(\frac{1}{r_1}-\frac{1}{2}\right)<\frac{\beta}{2Q},\ \ \ 1\leq r_1\leq r_2\leq 2,\label{sparse1}\\
 \nonumber\text{or}\\
 \ \ \  &\label{sparse2}\left(\frac{1}{r_1}-\frac{1}{r_2}\right)<\frac{\beta}{2Q},\ \ \ 1\leq r_1\leq 2\leq r_2\leq r'_{1}.
 \end{align}
\end{theorem}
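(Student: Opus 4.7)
The plan is to follow the general template for bilinear sparse domination of spectral multipliers, adapted to handle the oscillatory smoothing encoded by $\mathscr{M}(\theta,\beta)$. The first step is to decompose
\begin{equation*}
m(\sqrt{\La}) = \sum_{j:\,j\theta>0} T_j, \qquad T_j := m^j(\nu^{-j}\sqrt{\La}),
\end{equation*}
using the partition of unity $\sum_j \phi(\nu^{-j}\lambda)=1$. Definition $\mathscr{M}(\theta,\beta)$ yields two per-scale bounds: the first condition gives the operator estimate $\|T_j\|_{L^2\to L^2}\lesssim \nu^{-j\theta\beta/2}$, encoding the oscillatory gain; the second, combined with the Mauceri--Meda Plancherel formula on $G$ (Plancherel measure $\lambda^{Q-1}\,d\lambda$) and Sobolev embedding in the spectral variable, delivers a weighted $L^2$ estimate on the convolution kernel $K_j$ of $T_j$, certifying that $K_j$ is essentially supported in the ball of radius $\nu^j$ with respect to the Carnot--Carath\'eodory metric $\rho$, with controlled size at that scale.

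Interpolating these two bounds produces, for each pair $(p,q)$ with $1\le p\le 2\le q$, a single-scale $L^p(R)\to L^q(R)$ estimate for $T_j$ when $R$ is at the matching scale $\ell(R)\sim \nu^j$. Once the resulting constant is rewritten against the normalization of $|R|\Ln f\Rn_{p,R}\Ln g\Rn_{q',R}$, the geometric summability over $j$ holds precisely under the hypotheses \eqref{sparse1} and \eqref{sparse2}; this is where the two ranges of $(r_1,r_2)$ enter naturally. With this summable single-scale bound in hand, the proof runs the now-standard bilinear sparse stopping-time algorithm on a Christ dyadic system on $G$: fix a large reference cube $R_0$, recursively select children cubes of $R$ where either of the averages $\Ln f\Rn_{r_1,R'}$ or $\Ln g\Rn_{r_2',R'}$ exceeds a large multiple of the corresponding average on $R$, pair each selected $R$ with the unique scale $\nu^j\sim \ell(R)$, and use the single-scale bound to estimate the principal contribution by $|R|\Ln f\Rn_{r_1,R}\Ln g\Rn_{r_2',R}$. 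Iterating produces $\mathcal{S}$; interchanging the roles of $f$ and $g$ (and of $r_1$ and $r_2'$ in the stopping criterion) produces $\mathcal{S}'$.

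The main obstacle is the off-diagonal book-keeping: contributions of $T_j$ at scales $\nu^j$ much smaller or much larger than $\ell(R)$ must be shown to be absorbable on the ``children'' side of the stopping-time decomposition. The case $\nu^j\ll\ell(R)$ is handled via the effective localisation of $K_j$ in a much smaller ball, so $T_j(f\mathbf{1}_R)$ is morally a local operator and is inherited by the next generation of cubes. The case $\nu^j\gg\ell(R)$ uses the $L^2\to L^2$ operator bound together with the weighted $L^2$ decay of $K_j$ outside $B(e,\nu^j)$ and a quasi-orthogonality argument across widely separated scales. Summing the two resulting geometric series and absorbing them into the principal term is exactly what forces the sharpness of \eqref{sparse1}--\eqref{sparse2}, and it is the delicate point of the whole argument, since any slackness here propagates through the recursion and destroys the sparse bound.
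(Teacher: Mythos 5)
Your high-level template --- decompose dyadically in frequency, prove per-scale bounds, feed them into a sparse machine --- is the right family of ideas, but the proposal misidentifies the crucial spatial scale and omits the second, spatial decomposition that the proof actually lives and dies on. Both gaps are fatal.

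First, the scale. You claim that the kernel $K_{m_j}$ of $T_j$ is ``essentially supported in the ball of radius $\nu^j$.'' This is not what the hypotheses in the definition of $\mathscr{M}(\theta,\beta)$ give. Writing $m_j(\lambda)=m^{j}(\nu^{-j}\lambda)$ so that $K_{m_j}(x)=\nu^{jQ}K_{m^j}(\delta_{\nu^j}x)$, the weighted Plancherel estimate \eqref{pe-2} together with $\|m^j\|_{L^2_s}\lesssim \nu^{j\theta(2s-\beta)/2}$ shows that $K_{m^j}$ is concentrated at $|x|\lesssim \nu^{j\theta}$, hence $K_{m_j}$ is concentrated at $|x|\lesssim \nu^{j\theta}/\nu^j=\nu^{-j(1-\theta)}$. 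That $\theta$-dependent dilation of the natural uncertainty-principle scale $\nu^{-j}$ is precisely the fingerprint of the oscillation $e^{i\lambda^\theta}$ (stationary phase pushes the kernel out or in depending on $\theta$). If you pair cubes $R$ with $\ell(R)\sim\nu^j$ instead of $\ell(R)\sim\nu^{-j(1-\theta)}$, the single-scale estimates do not renormalize correctly against $|R|\Ln f\Rn_{r_1,R}\Ln g\Rn_{r_2',R}$, and the exponent arithmetic that is supposed to yield conditions \eqref{sparse1} and \eqref{sparse2} will not come out. The cube levels used in the actual proof, $\mathscr{D}_{\lfloor j(1-\theta)-j\epsilon\rfloor}$ and $\mathscr{D}_{\lfloor j(1-\theta)-l\rfloor}$, encode exactly this $\theta$-shifted scale; it is not optional.

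Second, the missing decomposition. The frequency decomposition $m(\sqrt{\La})=\sum_j T_j$ alone is not enough; the proof introduces the further spatial splitting
\begin{align*}
T_j^{\,l}f(x)=\int f(z)\,K_{m_j}(z^{-1}x)\,\phi\bigl(\nu^{-l+j(1-\theta)}|z^{-1}x|\bigr)\,dz,\qquad l\in\mathbb{Z},
\end{align*}
and then groups $l\le j\epsilon$ versus $l>j\epsilon$. This is the step that makes your ``off-diagonal book-keeping'' obstacle tractable: for $l>j\epsilon$ the kernel sits in the tail region $|x|\sim\nu^{l-j(1-\theta)}$ where the weighted Plancherel bound gives superexponential decay $\nu^{-Q(Q+\theta\beta/2)(j+l)}$ simultaneously on $L^1$, $L^2$, and $L^1\to L^\infty$ (Lemmas \ref{l2l2}, \ref{l1l1}, \ref{linf}), and this is what absorbs the non-local contributions. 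The heuristic ``quasi-orthogonality across widely separated scales'' that you invoke does not appear and has no obvious substitute here; without the $l$-decomposition and its attendant tail estimates, the large-$\ell(R)$/small-$\nu^j$ mismatch has no mechanism to be summed. Relatedly, a genuine $L^1$-bound for $T_j$ alone (needed for your interpolation between $p=1$ and $p=2$) requires the full $L^2_s$-Sobolev hypothesis with $s>Q/2$, and plugging in the growth $\nu^{j\theta(s-\beta/2)}$ leaves an $\epsilon$-loss $\nu^{j(\theta Q/2+\varepsilon)}$ that must be tracked through the interpolation (Lemma \ref{lsls}); your proposal does not account for this.

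Finally, a structural remark rather than an error: the paper does not run a recursive stopping-time algorithm. It localizes each $T_j^{\,l}$ directly to the matching dyadic level of a Hyt\"onen--Kairema system, exploits the disjointness of same-level cubes, shows the resulting double sum over $j$ and $l$ is geometrically decaying in the exponent whenever \eqref{sparse1} or \eqref{sparse2} holds, and then appeals to the standard conversion (as in Beltran--Cladek and Lacey--Mena) from a geometrically decaying collection to a true sparse bound. A Lerner-style stopping-time recursion could in principle be used instead, but it would still require the $l$-decomposition and the correct $\theta$-dependent matching scale, which are the real content here.
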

As a special case, we obtain the following corollary.
\begin{corollary}
\label{mainthm-2}
 Let $\theta\in \mathbb{R}\setminus \{0\}$ and $\beta\geq 0.$ Then there exist sparse families $\mathcal{S}$ and $\mathcal{S}'$ such that for all compactly supported bounded functions $f, g$ we have
 \begin{align*}
 |\langle m_{\theta, \beta}(\sqrt{\La})f, g \rangle |\lesssim_{\theta, \beta, r_{1}, r_{2}}\sum_{R\in \mathcal{S}} |R|\langle f \rangle_{r_1, R}\Ln g\Rn_{r_2', R}\\
 \text{and}\,\, |\langle m_{\theta, \beta}(\sqrt{\La})f, g \rangle |\lesssim_{\theta, \beta, r_{1}, r_{2}}\sum_{R\in \mathcal{S'}} |R|\langle f \rangle_{r_2', R}\Ln g\Rn_{r_1, R}, 
 \end{align*}
 where $r_1, r_2$ satisfy either condition \eqref{sparse1} or \eqref{sparse2}.
\end{corollary}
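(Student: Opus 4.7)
The plan is to derive Corollary \ref{mainthm-2} as an immediate consequence of Theorem \ref{mainthm-1} once we verify that the concrete oscillating symbol $m_{\theta,\beta}$ belongs to the abstract class $\mathscr{M}(\theta,\beta)$. This is precisely the content of the Example preceding the statement, so our task reduces to writing out the two defining estimates for the dyadic pieces $m^j_{\theta,\beta}(\lambda) = m_{\theta,\beta}(\nu^j\lambda)\phi(\lambda)$, and then quoting the abstract theorem.

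First I would record the explicit form
\[
m^j_{\theta,\beta}(\lambda) \;=\; \nu^{-j\theta\beta/2}\,\lambda^{-\theta\beta/2}\, e^{i\nu^{j\theta}\lambda^{\theta}}\,\phi(\lambda)\,\chi_{\{\nu^{j\theta}\lambda^{\theta}\geq 1\}},
\]
and observe that for all sufficiently large $j$ with $j\theta>0$ the cutoff $\chi_{\{\nu^{j\theta}\lambda^{\theta}\geq 1\}}$ equals $1$ on $\mathrm{supp}\,\phi\subset[\nu^{-1},\nu]$; the finitely many remaining $j$ contribute a uniformly bounded constant. Since $\lambda$ is comparable to $1$ on $\mathrm{supp}\,\phi$, the $L^{\infty}$ bound is then immediate:
\[
\|m^j_{\theta,\beta}\|_{L^{\infty}(\mathbb R_{+})} \;\lesssim\; \nu^{-j\theta\beta/2},
\]
which rearranges to the first supremum in the definition of $\mathscr{M}(\theta,\beta)$.

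For the Sobolev condition I would differentiate $s$ times. Each application of $\partial_\lambda$ to the phase $e^{i\nu^{j\theta}\lambda^\theta}$ yields a factor $i\theta\nu^{j\theta}\lambda^{\theta-1}$, so after $s$ derivatives one picks up at most $\nu^{js\theta}$ times a smooth function bounded uniformly on $[\nu^{-1},\nu]$ (the contributions from the amplitude $\lambda^{-\theta\beta/2}\phi(\lambda)$ are of lower order in $\nu^{j\theta}$ and harmless). This gives
\[
\|m^j_{\theta,\beta}\|_{L^{2}_{s}(\mathbb R_{+})} \;\lesssim\; \nu^{-j\theta\beta/2}\,\nu^{js\theta},
\]
and multiplying by $\nu^{-j\theta(2s-\beta)/2}$ produces exactly $\nu^{0}=1$, so the second supremum is also finite. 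Thus $m_{\theta,\beta}\in\mathscr{M}(\theta,\beta)$.

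With membership verified, the conclusion follows directly: apply Theorem \ref{mainthm-1} to $m=m_{\theta,\beta}$ under either admissible condition \eqref{sparse1} or \eqref{sparse2}, and the two stated sparse forms are produced by the same sparse families $\mathcal{S},\mathcal{S}'$. I do not anticipate a real obstacle; the only subtlety worth flagging is tracking the powers of $\nu^{j\theta}$ arising from differentiating the oscillatory phase, which is exactly what the indices $\beta/2$ and $(2s-\beta)/2$ in Definition are tailored to absorb.
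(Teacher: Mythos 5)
Your proposal is correct and is exactly the route the paper takes: Corollary~\ref{mainthm-2} is stated as an immediate special case of Theorem~\ref{mainthm-1}, with the membership $m_{\theta,\beta}\in\mathscr{M}(\theta,\beta)$ asserted (without detail) in the Example preceding the theorem. Your computation of the $L^\infty$ and $L^2_s$ bounds for $m^j_{\theta,\beta}$, keeping track of the factor $\nu^{j\theta}$ produced each time the phase $e^{i\nu^{j\theta}\lambda^\theta}$ is differentiated, correctly fills in the details the paper leaves as ``easy to see.''
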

The motivation for proving such an estimate arises from recent works \cite{BFP, BC, Conde, HytA2, Hyt17, Lacey17, Laceysph, LM, Lerner-NYJM} where sparse domination is achieved for several classical operators in Harmonic analysis in various settings. The key importance in proving such an estimate lies in the fact that one can obtain a range of quantitative weighted estimates depending on the decay parameter $\beta,$ we state them here. The Muckenhoupt class of weights ($A_p$) and reverse H\"older's classes ($RH_{q}$) are defined in details in Section~\ref{Appl}.
\begin{theorem}
\label{quantitative}
Let $\theta\in \mathbb{R}\setminus \{0\}.$ We have the following results:
\begin{enumerate}[i)]
    \item 
Let $m\in \mathscr{M}(\theta, 2Q).$ Then $m(\sqrt{\La})$ maps $L^p(\omega)$ to $L^p(\omega)$ for all $1<p<\infty$ and $\omega\in A_{p}.$ 
\item
Let $m\in \mathscr{M}(\theta, \beta)$ with $Q\leq \beta< 2Q.$ Then $m(\sqrt{\La})$ maps $L^p(\omega)$ to $L^p(\omega)$ for $p_{\beta}<p<\infty$ and $\omega\in A_{p/p_{\beta}},$ where $p_{\beta}:=\frac{2Q}{\beta}.$
\item
Let $m\in \mathscr{M}(\theta, \beta)$ with $0<\beta<Q.$ Then $m(\sqrt{\La}): L^p(\omega)\to L^p(\omega)$ for all $2<p<s_{\beta},\ \  \omega\in A_{p/2}\cap RH_{(s_{\beta}/p)'},$ where $\frac{1}{s_{\beta}}:=\frac{1}{2}-\frac{\beta}{2Q}.$
\end{enumerate}
\end{theorem}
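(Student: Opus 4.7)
My plan is to read off each of the three weighted statements from the two sparse bounds in Theorem \ref{mainthm-1} by applying the standard bilinear sparse-to-weighted machinery. The general principle I rely on is that a sparse domination of the form $|\langle Tf,g\rangle|\lesssim\sum_{R\in\mathcal{S}}|R|\langle f\rangle_{r,R}\langle g\rangle_{s,R}$ implies quantitative boundedness $T\colon L^p(w)\to L^p(w)$ for every $p\in(r,s')$ and every $w\in A_{p/r}\cap RH_{(s'/p)'}$; this is the result of Bernicot--Frey--Petermichl \cite{BFP}. Combining this principle with the openness (self-improvement) property of the $A_q$ and $RH_q$ classes---equivalently, Rubio de Francia extrapolation---will convert the two sparse inequalities of Theorem \ref{mainthm-1} into the three weighted statements, one case at a time.

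For part (iii), with $0<\beta<Q$, I would use condition \eqref{sparse1} with $r_2=2$ and $r_1$ slightly larger than $s_\beta'$; the constraint $1/r_1-1/2<\beta/(2Q)=1/2-1/s_\beta$ is exactly $1/r_1<1/s_\beta'$. Applying the sparse-to-weighted principle to the \emph{second} sparse form of Theorem \ref{mainthm-1} (with the roles of $f$ and $g$ swapped) then yields $T\colon L^p(w)\to L^p(w)$ on $p\in(2,r_1')$ for $w\in A_{p/2}\cap RH_{(r_1'/p)'}$. Since $r_1'\to s_\beta$ as $r_1\to s_\beta'$, and since both $A_{p/2}$ and $RH_{(s_\beta/p)'}$ are open classes, for any target pair $(p,w)$ in the claimed range I can choose $r_1$ close enough to $s_\beta'$ that the sparse hypothesis is satisfied for that particular $w$, exhausting the full range $2<p<s_\beta$.

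For part (ii), with $Q\leq\beta<2Q$ and $p_\beta=2Q/\beta\in(1,2]$, I would take $r_1=1$ and $r_2<p_\beta'$ in condition \eqref{sparse2}; indeed, the constraint $1-1/r_2<\beta/(2Q)=1/p_\beta$ is exactly $r_2<p_\beta'$. The second sparse form then produces $T\colon L^p(w)\to L^p(w)$ on $p\in(r_2',\infty)$ for $w\in A_{p/r_2'}$, with no reverse-H\"older term because $(r_1'/p)'=1$ when $r_1=1$. Given any target $w\in A_{p/p_\beta}$, the openness of the $A_q$ class (equivalently, Rubio de Francia extrapolation at the level $r=p_\beta$) produces some $r_2'>p_\beta$ arbitrarily close to $p_\beta$ with $w\in A_{p/r_2'}$, covering the full range $p>p_\beta$. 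Part (i) is the $\beta=2Q$ specialization of the same strategy: both \eqref{sparse1} and \eqref{sparse2} become trivial, so one may let $r_2\to\infty$ freely in the second sparse form, and the openness of $A_p$ recovers the complete $1<p<\infty$ range for all $w\in A_p$.

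The plan involves no deep analytic difficulty beyond Theorem \ref{mainthm-1} itself; the main obstacle will be the careful bookkeeping at the boundaries of the ranges---the limits $r_1\to s_\beta'$, $r_2\to p_\beta'$, and $r_2\to\infty$---which cannot be realized by substituting the boundary values directly into \eqref{sparse1}--\eqref{sparse2} and must instead be accessed through the self-improvement properties of the weight classes. A secondary check is that the Bernicot--Frey--Petermichl principle is available in the stratified-group setting, but this is routine because that argument only uses the Lebesgue differentiation theorem and the dyadic structure underlying the sparse family, both of which are at our disposal here.
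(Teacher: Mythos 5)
Your proposal is correct and tracks the paper's own proof closely: in both, the three weighted claims are read off from Theorem \ref{mainthm-1} by the Bernicot--Frey--Petermichl lemma (Lemma \ref{BFP}) together with the self-improvement (openness) properties of the $A_q$ and $RH_q$ classes, using the second sparse form with $r_2=2$ for case (iii) and with $r_1=1$ for cases (i)--(ii). One bookkeeping point worth pinning down: at the endpoint $\beta=Q$ (so $p_\beta=2$), your substitution $r_1=1$ into \eqref{sparse2} asks for $r_2<p_\beta'=2$, which clashes with the constraint $2\leq r_2$; there one should instead use \eqref{sparse1} with $r_2=2$ and $r_1>1$ small, and absorb the resulting $RH_{(r_1'/p)'}$ hypothesis via the $A_\infty$ reverse H\"older property of $A_{p/2}$ weights. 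The paper's own write-up (``$(r_1,1)$-sparse domination for $r_1>p_\beta$'') elides this in the same way, so this is a shared clarification rather than a defect in your plan.
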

We believe these results are completely new in the setting of stratified Lie groups. The article is organized as follows. In the next section, we recall some necessary preliminaries and Section~\ref{main} contains the proof of Theorem~\ref{mainthm-1}. In Section~\ref{Appl}, we prove Theorem~\ref{quantitative} and other applications of Theorem~\ref{mainthm-1} to Riesz means and dispersive equations.
\section{Preliminaries}
Let $\{\delta_r\}_{r>0}$ be the group of dilations associated to $G$ and let $|\cdot|$ be a homogeneous quasi-norm, i.e.,  $|x| = 0$ if and only if $x=0$ where $0$ denotes the group identity, and $|\delta_r x | = r |x|$ for all $r>0$ and $x\in G$. Moreover, the right convolution kernel of the operator $m(\sqrt{\La})$ will be denoted by $K_{m},$ that is, $$m(\sqrt{\La})f(x)=\ \int_G f(x\cdot y^{-1}) \,
K_m (y) \, dy=\int f(y)K_{m}(y^{-1}x)\ dy.$$
In general, $K_{m}$ is just a distribution but whenever $m$ is compactly supported, $K_m$ can be identified with an $L^2$ function on $G.$ See \cite{FR} for more details regarding analysis on these groups. The following estimates are well known. 
\begin{theorem}[\cite{sikora}]
 The following kernel estimates are either known. For any function $h$ and $R>0$, we denote $h_{R}(t):=h(tR).$ 
 \begin{enumerate}[i)]
     \item 
The following Plancherel-type identity holds
 \begin{align}
 \label{pe-1}
  \|K_{h}\|^{2}_{L^2(G)}=\int_{0}^{\infty} |h(t)|^2 t^{Q-1}\, dt.    
 \end{align}    
 In particular, if the multiplier is supported on $[0, R]$ then $\|K_{h}\|^{2}_{L^2(G)}\leq R^{Q}\|h_{R}\|_{2}^{2}.$
 \item
 For any compactly supported multiplier $h,$
 \begin{equation}\label{pe-2}
\int_G |K_{h}(x)|^2 (1+|x|^s)^2 \, dx \ \lesssim \ \|h\|^2_{L^2_s(\mathbb{R}_{+})}
\end{equation}
holds for any $s > 0$. As a consequence, $\|K_{h}\|_{L^1}\leq \|h\|_{L^2_{s}(\mathbb{R}_{+})}$ for $s>\frac{Q}{2}.$
 \end{enumerate}
\end{theorem}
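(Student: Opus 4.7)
Both claims are functional-calculus statements for the positive self-adjoint operator $\sqrt{\La}$, and both rest on the spectral theorem together with the dilation structure of $G$. For part (i), I would start from the abstract Plancherel identity: since $h\mapsto h(\sqrt{\La})$ is a $*$-homomorphism and $h\mapsto K_h$ carries (compactly supported) multipliers isometrically into a subspace of $L^2(G)$, there exists a positive Borel measure $\mu$ on $[0,\infty)$ with
\begin{equation*}
\|K_h\|_{L^2(G)}^2 \;=\; \int_0^\infty |h(\lambda)|^2\,d\mu(\lambda).
\end{equation*}
To identify $\mu$, note that $T_R f := f\circ\delta_R$ satisfies $\mathcal L\circ T_R = R^2 T_R\circ\mathcal L$, so $T_R^{-1}h(\sqrt{\La})T_R = h(R\sqrt{\La})$. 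A change-of-variables computation on the right-convolution side then gives $K_{h(R\cdot)}(y) = R^Q K_h(\delta_R y)$, and hence $\|K_{h(R\cdot)}\|_{L^2}^2 = R^Q\|K_h\|_{L^2}^2$. Substituting this into the Plancherel identity and varying $R$ forces $d\mu(\lambda) = c\,\lambda^{Q-1}\,d\lambda$; the constant $c$ is pinned down by testing on the heat multiplier $\lambda\mapsto e^{-t\lambda^2}$, whose kernel is the heat kernel on $G$ and whose $L^2$ norm can be computed directly. The ``in particular'' statement follows from the identity by rescaling a multiplier supported in $[0,R]$ to one supported in $[0,1]$.

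For part (ii), the task is to convert the polynomial weight $|x|^s$ on $G$ into Sobolev regularity of $h$ on $\mathbb R_+$. My preferred route is via finite propagation speed of the wave semigroup $\cos(\tau\sqrt{\La})$: the convolution kernel of $\cos(\tau\sqrt{\La})$ is supported in $\{x\in G:|x|\le c|\tau|\}$, a fact that is standard for sublaplacians on stratified groups. Applying Fourier inversion to the even part of $h$,
\begin{equation*}
h(\sqrt{\La}) \;=\; \frac{1}{\pi}\int_0^\infty \widehat{h_{e}}(\tau)\cos(\tau\sqrt{\La})\,d\tau,
\end{equation*}
one obtains $K_h(x) = \frac{1}{\pi}\int_{|\tau|\geq |x|/c}\widehat{h_e}(\tau)\,K_{\cos(\tau\sqrt{\La})}(x)\,d\tau$. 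A Minkowski/Plancherel argument in $\tau$ combined with part (i) applied for each fixed $\tau$ gives the weighted bound $\int_G |x|^{2s}|K_h(x)|^2\,dx \lesssim \|h\|_{L^2_s(\mathbb R_+)}^2$; combining with the trivial case $s=0$ yields the stated estimate with the weight $(1+|x|^s)^2$. An alternative, induction-based route handles the integer $s$ case by showing that multiplication by $|x|^2$ on the group side corresponds, via spectral calculus and a commutator computation with the Euler dilation field $\sum_j j X_j\partial_{X_j}$, to a second-order differential operator in $\lambda$ applied to $h$ modulo lower-order terms. The $L^1$ corollary follows at once from Cauchy--Schwarz against $(1+|x|^s)^{-1}\in L^2(G)$, which is integrable when $2s>Q$.

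The main obstacle is part (ii): while (i) is essentially the spectral theorem plus a scaling argument, (ii) genuinely requires either the finite propagation speed of $\cos(\tau\sqrt{\La})$ on $G$ or the commutator identity computing $[\mathcal L^{k},|x|^{2}]$, and both demand some care to set up rigorously on a general stratified group. Once these tools are in place, assembling the estimate is routine; and indeed both parts are quoted from \cite{sikora} in the paper, so no new work is needed here beyond invoking the reference.
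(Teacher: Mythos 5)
Both items are quoted from the Sikora reference and are not re-proved in the paper, so there is no ``paper's own proof'' to match against; what follows compares your sketch with the standard argument that [\emph{sikora}] and the related literature (Hebisch, Duong--Ouhabaz--Sikora) actually use.

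Your outline is essentially the right one, but there is a sign/inversion error in the dilation computation in part~(i). With $T_R f=f\circ\delta_R$ one indeed has $T_R^{-1}h(\sqrt{\La})T_R=h(R\sqrt{\La})$, but the resulting kernel relation is
\begin{equation*}
K_{h(R\,\cdot)}(y)\;=\;R^{-Q}\,K_h\bigl(\delta_{R^{-1}}y\bigr),\qquad\text{hence}\qquad \|K_{h(R\,\cdot)}\|_{L^2}^2\;=\;R^{-Q}\,\|K_h\|_{L^2}^2,
\end{equation*}
not $R^{Q}K_h(\delta_R y)$ and $R^{Q}\|K_h\|_{L^2}^2$ as you wrote (this is the same convention that the paper uses when it writes $K_{\psi(\nu^{-j}\cdot)}(x)=\nu^{jQ}K_{\psi}(\delta_{\nu^j}x)$, i.e.\ $K_{\psi(\cdot/a)}(x)=a^Q K_\psi(\delta_a x)$, which corresponds to $R=a^{-1}$). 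Plugging your version into the Plancherel identity would force $d\mu(\lambda)\propto\lambda^{-(Q+1)}\,d\lambda$, which is wrong; the corrected scaling $R^{-Q}$ gives $d\mu(\lambda)=c\,\lambda^{Q-1}\,d\lambda$ as required. So the conclusion you state is right, but the intermediate computation as written would not produce it.

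On part~(ii), the finite-propagation-speed route via $h(\sqrt{\La})=\tfrac{1}{\pi}\int_0^\infty\widehat{h_e}(\tau)\cos(\tau\sqrt{\La})\,d\tau$ is indeed the standard mechanism, but the phrase ``A Minkowski/Plancherel argument in $\tau$ combined with part (i) applied for each fixed $\tau$'' glosses over the step that actually makes the estimate close. If one localises $K_h$ to the annulus $|x|\sim 2^k$ and then estimates by Cauchy--Schwarz in $\tau$ (or applies a Minkowski-type bound with $\|\widehat{h_e}\chi_{\{|\tau|\gtrsim 2^k\}}\|_2\lesssim 2^{-ks}\|h\|_{L^2_s}$), the sum over $k$ of $2^{2ks}\cdot 2^{-2ks}\|h\|_{L^2_s}^2$ diverges. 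The argument must instead interchange the $k$-sum with the $\tau$-integral, using $\sum_{k:\,2^{k}\lesssim|\tau|}2^{2ks}\lesssim|\tau|^{2s}$, which reproduces $\|h\|_{L^2_s}^2$ exactly; this Fubini/orthogonality step is the whole content. Also note that ``part~(i) for each fixed $\tau$'' is not directly applicable, since $\cos(\tau\cdot)$ is not compactly supported and $K_{\cos(\tau\sqrt{\La})}$ is not an $L^2$ function; one must keep the $\tau$-integral inside and invoke Plancherel on $\mathbb R$ plus the first part for the full multiplier. The alternative commutator route you mention also works but is more delicate on a stratified group. The $L^1$ corollary via Cauchy--Schwarz against $(1+|x|^s)^{-1}$ is fine, and the ``in particular'' rescaling in~(i) is correct once the scaling above is fixed.
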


We also need the following notion of dyadic grids in spaces of homogeneous type. We refer to \cite{Chr90} and \cite{HytKai, Lor21} for details. Let $0<c_1\leq C_1<\infty$ and $\mu\in (0,1)$. By a general dyadic grid $\mathscr{D}=\bigcup_{k\in \mathbb{Z}}\mathscr{D}_k$ on $G$, we mean a countable collection of sets ${R}_k^{\alpha}$ for $k\in\mathbb{Z}$, each associated with a point $z_k^\alpha$, $\alpha$ coming from a countable index set, with the following properties:
\begin{itemize}
    \item $G=\bigcup\limits_{\alpha}{R}_k^{\alpha}$ for every $k\in\mathbb{Z}$.
    \item If $l\geq k$, then either $R_l^{\beta}\subset R_k^{\alpha}$ or $R_l^{\beta}\cap R_k^{\alpha}=\emptyset$.
    \item For the constants $c_1, C_1>0$ we have $B(R_k^\alpha):=B(z_k^\alpha,c_1\mu^k)\subset R_k^\alpha\subset B(z_k^\alpha,C_1\mu^k)=:C_{1} B(R_k^\alpha)$.
    \item If $l\geq k$ and $R_l^{\beta}\subset R_k^{\alpha}$, then $C_1 B(R_l^{\beta})\subset C_1 B(R_k^{\alpha})$.
\end{itemize}
Hyt\"onen and Kairema \cite[Theorem 4.1]{HytKai} proved the existence of a finite collection of dyadic grids $\mathscr{D}^{n},$ $n=1,2,\ldots,\mathfrak{N},$ such that for every ball $B(z,r)\subset G$ with $\mu^{k+2}\leq r<\mu^{k+1}$, there exists some $n\in \{1,2,\ldots,\mathfrak{N}\}$ and $R_k^\alpha\in \mathscr{D}^n$ such that $B(z,r)\subset R_k^\alpha$ and $\text{diam}\,(R_k^\alpha)\leq C\,r$, where $C$ depends on $\mu$. For the purposes of this article, the number $\mu\in (0, 1)$ is now considered fixed and $\nu$ will denote $\frac{1}{\mu}$. 

\begin{remark}
We remark that the sparse families $\mathcal{S}, \mathcal{S'}$ in Theorem~\ref{mainthm-1} consist of elements from the dyadic grids $\mathscr {D}^n,$ $n=1,2,\ldots,\mathfrak{N}.$
\end{remark}

\section{Proof of Theorems}
\label{main}
We shall prove Theorem~\ref{mainthm-1} for case $\theta>0.$ Let us fix $\theta>0, \beta\geq 0,$ and $m\in \mathscr{M}(\theta, \beta).$ Recall that $m^j(\lambda) := m(\nu^j \lambda) \phi(\lambda)$ for $j\geq 0,$ where $\phi$ is a smooth function on $(0, \infty),$ supported
on $\{\nu^{-1}\leq \lambda\leq \nu\},$ satisfying $\sum_{j} \phi(\nu^{-j}\lambda)=1.$ Then, $m^j$ satisfy the following
\begin{align}
\label{hypo-1}
& \|m^j\|_{L^{\infty}({\mathbb R}_{+})}\lesssim \nu^{-j \theta \beta/2}\ \ \ \text{for}\ \ j\geq 0,\\
\nonumber \text{and}\ \ &\|m^j\|_{L^2_s({\mathbb R}_{+})}\lesssim \nu^{ j \theta (2s - \beta) /2}\ \ \ \text{for}\ \ j\geq 0,
\label{hypo-1}
\end{align}
where the implicit constants are independent of $j.$ We also introduce the following notation $m_{j}(\lambda):=m(\lambda) \phi(\nu^{-j}\lambda).$ Recall that $\sum_{j} \phi(\nu^{-j}\lambda)=1,$ then $m=\sum_{j\geq 0} m_{j}(\lambda).$ Moreover, $m_{j}(\lambda)=m^{j}(\nu^{-j}\lambda).$ Then we have the following decomposition
\begin{align*}
m(\sqrt{\La})=\sum_{j\geq 0} T_{j}, \ \ \ \ \text{where}\ \ \ \ T_{j}f=f* K_{m_{j}}.    
\end{align*}
 It is easy to see by homogeneity that $K_{m_{j}}=K_{m}* (\nu^{jQ}K_{\phi}(\delta_{\nu^j}\cdot))(x).$ Motivated by \cite{BC} we make a further decomposition in the space variable, namely
 \begin{align}
  T^{l}_{j}f(x)=\int f(z)K_{m_{j}}(z^{-1}x) \phi(\nu^{-l+j(1-\theta)}|z^{-1}x|) \ dz,\ \ \ l\in \mathbb{Z}.    
 \end{align}
Then $T_{j}f=\sum_{l\in \mathbb{Z}}T^{l}_{j}f$ and consequently $m(\sqrt{\La})=\sum_{j\geq 0}\sum_{l\in \mathbb{Z}} T^{l}_{j}.$ Now we shall focus on proving certain crucial estimates and for some $\epsilon>0$ we group the terms according to their spatial scale, i.e., $$T_{j}f=\sum_{l\leq j\epsilon} T^{l}_{j}f+ \sum_{l>j \epsilon} T^{l}_{j}f.$$

Let us start by proving $L^2-L^2$ estimates for the pieces $T^{l}_{j}.$ Let $l>j\epsilon$ and denote $g:=K_{m_{j}}(\cdot) \phi(\nu^{-l+j(1-\theta)}|\cdot|).$
By Young's inequality we have
\begin{align*}
 \|T_{j}^{l}f\|_{2}\leq \|f\|_{2}\|g\|_{1}
 &\leq \|f\|_{2}\left(\int_{|x|\simeq \nu^{l-j(1-\theta)}}|K_{m_{j}}(x)|\, dx \right)\\
 &= \|f\|_{2}\left(\int_{|x|\simeq \nu^{l-j(1-\theta)}} \nu^{jQ}|K_{m^{j}}(\delta_{\nu^{j}}x)|\, dx \right)\\
 &\leq \|f\|_{2}\left(\int_{|x|\simeq \nu^{l+j\theta}} |K_{m^{j}}(x)|\, dx \right)\\
 &=\|f\|_{2}\left(\int_{|x|\simeq \nu^{l+j\theta}}(1+|x|^s) (1+|x|^s)^{-1}|K_{m^{j}}(x)|\, dx \right)\\
 &\leq \|f\|_{2}\left(\int_{|x|\simeq \nu^{l+j\theta}}(1+|x|^s)^{2} |K_{m^{j}}(x)|^{2}\, dx \right)^{1/2} \nu^{(l+j\theta)(\frac{Q}{2}-s)}\\
 &\lesssim  \nu^{(l+j\theta)(\frac{Q}{2}-s)} \|m^j\|_{L^2_{s}} \|f\|_{2}\lesssim  \nu^{(l+j\theta)(\frac{Q}{2}-s)}  \nu^{j\theta(s-\frac{\beta}{2})} \|f\|_{2}\\
 &\lesssim  \nu^{-\frac{j\theta \beta}{2}} \nu^{l(\frac{Q}{2}-\frac{s}{2})} \nu^{\frac{j\theta Q}{2}} \nu^{-l\frac{s}{2}}\|f\|_{2}\lesssim  \nu^{l(\frac{Q}{2}-\frac{s}{4})} \nu^{-\frac{ls}{4}} \nu^{\frac{j\theta Q}{2}} \nu^{-l\frac{s}{2}}\|f\|_{2}.
\end{align*}
Observe that the term $\nu^{l(\frac{Q}{2}-\frac{s}{4})}\ll 1$ if $s$ is chosen large enough. Moreover, as $l>j\epsilon,$ $\nu^{-\frac{ls}{4}} \nu^{\frac{j\theta Q}{2}}\leq \nu^{\frac{j\theta Q}{2}} \nu^{-\frac{j\epsilon s}{4}}\ll 1$ provided $s\gg \lfloor{\frac{Q\theta}{\epsilon}\rfloor}.$ Finally, choose $s$ large such that $\nu^{-l\frac{s}{4}}\leq \nu^{-Q(Q+\frac{\theta\beta}{2})l}$ as well as $\nu^{-l\frac{s}{4}}\leq \nu^{-j\epsilon\frac{s}{4}}\leq \nu^{-Q(Q+\frac{\theta\beta}{2})j}.$ Therefore, we obtain
\begin{align}
\label{l2-in}
\|T^{l}_{j}f\|_{2}\leq c_{\epsilon} \nu^{- Q(Q+\frac{\theta\beta}{2})(j+l)}\|f\|_{2} \ \ \ \text{for} \ \ l> j\epsilon. 
\end{align}
Certainly, $\|T_{j}f\|_{2}\leq \nu^{-\frac{j\theta \beta}{2}}\|f\|_{2}$ since $\|m_{j}\|_{L^{\infty}}\leq \nu^{-\frac{j\theta \beta}{2}}$. Combining this with \eqref{l2-in} we prove the following lemma:
\begin{lemma}
\label{l2l2}
We obtain the following estimates:
 \begin{enumerate}[i)]
     \item 
 \begin{equation*}
 \label{l2l2-big}
 \|T^{l}_{j}f\|_{2}\lesssim_{\epsilon} \nu^{-Q(Q+\frac{\theta\beta}{2})(j+l)}\|f\|_{2} \ \ \ \text{for} \ \ l> j\epsilon.    
 \end{equation*} 
 \item
 \begin{equation*}
  \label{l2l2-small}
 \|\sum_{l\leq j\epsilon}T^{l}_{j}f\|_{2}\lesssim_{\epsilon} \nu^{-\frac{j\theta \beta}{2}} \|f\|_{2}.
 \end{equation*}
 \end{enumerate}   
\end{lemma}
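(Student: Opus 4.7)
The plan is to treat the two estimates separately: part (i) carries all the analytic work, while part (ii) follows almost immediately from the $L^\infty$ bound on $m^j$ together with what is already established in part (i). For (i), I would observe that $T^l_j$ is convolution with the truncated kernel $g_{j,l}(x):=K_{m_j}(x)\phi(\nu^{-l+j(1-\theta)}|x|)$, which is supported on the annulus $\{|x|\simeq \nu^{l-j(1-\theta)}\}$. Young's convolution inequality then reduces the task to bounding $\|g_{j,l}\|_{L^1(G)}$. Using the scaling identity $K_{m_j}(x)=\nu^{jQ}K_{m^j}(\delta_{\nu^j}x)$ (a consequence of $m_j(\lambda)=m^j(\nu^{-j}\lambda)$ and homogeneity of $\sqrt{\La}$) and a change of variables, this $L^1$ norm transforms into $\int_{|y|\simeq\nu^{l+j\theta}}|K_{m^j}(y)|\,dy$.

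On this annulus I would apply Cauchy--Schwarz against the weight $(1+|y|^s)$, pairing the unweighted volume estimate of $(1+|\cdot|^s)^{-1}$ in $L^2$ on the annulus, which gives a factor $\nu^{(l+j\theta)(Q/2-s)}$, with the weighted kernel bound \eqref{pe-2}, namely $\|(1+|\cdot|^s)K_{m^j}\|_{L^2(G)}\lesssim \|m^j\|_{L^2_s(\mathbb R_+)}$. The second defining property of $\mathscr{M}(\theta,\beta)$ supplies $\|m^j\|_{L^2_s(\mathbb R_+)}\lesssim\nu^{j\theta(2s-\beta)/2}$, and the product collapses to
\begin{equation*}
\|g_{j,l}\|_{L^1(G)}\lesssim \nu^{-j\theta\beta/2}\,\nu^{lQ/2-ls+j\theta Q/2}.
\end{equation*}
Splitting the exponent $-ls$ as $-ls/4-ls/4-ls/2$ and taking $s$ large, the first piece kills the volume growth $\nu^{lQ/2}$, the second uses the scale separation $l>j\epsilon$ to dominate the rescaling factor $\nu^{j\theta Q/2}$, and the last piece is split once more to produce the prescribed decay $\nu^{-Q(Q+\theta\beta/2)(j+l)}$ in both indices.

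For (ii), the spectral theorem yields $\|T_j f\|_{L^2}\leq\|m_j\|_{L^\infty(\mathbb R_+)}\|f\|_{L^2}$, and the first hypothesis in the definition of $\mathscr{M}(\theta,\beta)$ gives $\|m_j\|_{L^\infty(\mathbb R_+)}=\|m^j\|_{L^\infty(\mathbb R_+)}\lesssim\nu^{-j\theta\beta/2}$. Since the cutoffs form a partition of unity, $\sum_{l\in\mathbb Z}T^l_j=T_j$, and the desired bound follows by writing $\sum_{l\leq j\epsilon}T^l_j=T_j-\sum_{l>j\epsilon}T^l_j$ and controlling the tail by summing the estimate from (i) geometrically, which has decay much faster than $\nu^{-j\theta\beta/2}$. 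The main obstacle throughout is the bookkeeping in (i): one must choose $s$ simultaneously large enough to beat the annulus volume, the rescaling factor $\nu^{j\theta Q/2}$, and still extract polynomial decay of any prescribed rate in $j+l$. The strict separation $l>j\epsilon$ is indispensable, since without it the factor $\nu^{j\theta Q/2}$ cannot be absorbed by any power of $\nu^{-ls}$.
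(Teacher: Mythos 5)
Your proposal follows the paper's own argument step for step: Young's inequality reduces to the $L^1$ norm of the truncated kernel, the scaling identity and a change of variables move everything to the $m^j$ scale, Cauchy--Schwarz against $(1+|\cdot|^s)$ combined with the Plancherel-type estimate \eqref{pe-2} and the $\mathscr{M}(\theta,\beta)$ hypotheses produce the exponent $\nu^{-j\theta\beta/2+lQ/2-ls+j\theta Q/2}$, and the four-way split of $-ls$ (kill the volume, absorb $\nu^{j\theta Q/2}$ via $l>j\epsilon$, and manufacture decay in $j$ and in $l$) is precisely the paper's bookkeeping. Part (ii) by spectral calculus plus subtracting the geometrically summable tail from (i) is likewise exactly the paper's derivation, so the proof is correct and matches the paper's route.
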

\begin{remark}
\label{Rmk}
 At this point we would like to remark that one can in fact improve the bounds for $T^{l}_{j}, l> j\epsilon,$ to $\|T^{l}_{j}f\|_{2}\lesssim_{\epsilon} \nu^{-c_{\theta} Q(Q+\frac{\theta\beta}{2})(j+l)}\|f\|_{2}$
 for some large constant $c_{\theta}$. The similar remark is also applicable for Lemma~\ref{l1l1}, Lemma~\ref{linf}, Lemma~\ref{lsls}, Lemma~\ref{lrls}, and Lemma~\ref{improv} but we refrain ourselves from mentioning it repetitively. 
\end{remark}
Now we shall prove $L^1-L^1$ estimates for the pieces $T^{l}_{j}.$ Recall that $g=K_{m_{j}}(\cdot) \phi(\nu^{-l+j(1-\theta)}|\cdot|).$ The previous argument shows that for any $l> j\epsilon$
\begin{equation}
\label{l1l1-in}
\|T^{l}_{j}f\|_{1}\leq \|f\|_{1} \|g\|_{1}\lesssim_{\epsilon} \nu^{-Q(Q+\frac{\theta\beta}{2})(j+l)}\|f\|_{1}.    
\end{equation}

Another observation, together with \eqref{pe-2}, yields the following for any $s>\frac{Q}{2}$ 
\begin{equation}
\label{reqL1}
 \|T_{j}f\|_{1}\leq \|f\|_{1} \|K_{m_j}\|_{1}\leq \|f\|_{1} \|K_{m^j}\|_{1}\leq \nu^{j\theta(s-\frac{\beta}{2})}\|f\|_{1}.
\end{equation}
Consequently, $$\|T_{j}f\|_{1}\leq \nu^{j(-\frac{\theta\beta}{2}+\frac{\theta Q}{2}+\varepsilon)}\|f\|_{1}\ \ \ \text{for any}\ \ \varepsilon>0.$$
Moreover, summing \eqref{l1l1-in} in $l,$ we obtain $$\|\sum_{l\leq j\epsilon}T^{l}_{j}f\|_{1}=\|T_{j}f-\sum_{l> j\epsilon}T^{l}_{j}f\|_{1}\lesssim \nu^{j(-\frac{\theta\beta}{2}+\frac{\theta Q}{2}+\varepsilon)}\|f\|_{1}.$$
\begin{lemma}
\label{l1l1}
Combining \eqref{l1l1-in} and the above discussion we have the following estimates:
\begin{enumerate}[i)]
\item
\label{l1l1-big}
For any $\varepsilon>0$ 
\begin{equation*}
\|\sum_{l\leq j\epsilon}T^{l}_{j}f\|_{1}\lesssim_{\epsilon} \nu^{j(-\frac{\theta\beta}{2}+\frac{\theta Q}{2}+\varepsilon)}\|f\|_{1}.   
\end{equation*}
    \item 
    \label{l1l1-small}
For any $l> j\epsilon,$ we have
\begin{equation*}
\|T^{l}_{j}f\|_{1}\lesssim_{\epsilon} \nu^{-Q(Q+\frac{\theta\beta}{2})(j+l)}\|f\|_{1}.    
\end{equation*}
\end{enumerate}
\end{lemma}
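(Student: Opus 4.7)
The plan is to derive both bounds from Young's convolution inequality combined with the kernel estimates \eqref{pe-1}--\eqref{pe-2} and the size/smoothness hypothesis defining $\mathscr{M}(\theta,\beta)$.

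For part (ii), I would proceed exactly in parallel with the $L^2$--$L^2$ calculation of Lemma~\ref{l2l2}. Writing $g(x) := K_{m_j}(x)\phi(\nu^{-l+j(1-\theta)}|x|)$, Young's inequality gives $\|T^{l}_{j}f\|_{1}\leq \|g\|_{1}\|f\|_{1}$. Then I would pass from $K_{m_j}$ to $K_{m^j}$ by the homogeneity identity $K_{m_{j}}(x)=\nu^{jQ}K_{m^{j}}(\delta_{\nu^{j}}x)$, insert the factor $(1+|x|^s)(1+|x|^s)^{-1}$, apply Cauchy--Schwarz on the annulus $|x|\simeq \nu^{l+j\theta}$, and use \eqref{pe-2} together with the hypothesis $\|m^j\|_{L^2_s}\lesssim \nu^{j\theta(s-\beta/2)}$. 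This produces the same chain of geometric factors as for the $L^2$ case, and with $s$ chosen large enough (as in the discussion preceding Lemma~\ref{l2l2}) one converts them into the decay $\nu^{-Q(Q+\theta\beta/2)(j+l)}$.

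For part (i), I would use the triangle inequality
\begin{equation*}
\Big\|\sum_{l\leq j\epsilon}T^{l}_{j}f\Big\|_{1}\;\leq\; \|T_{j}f\|_{1}\;+\;\sum_{l> j\epsilon}\|T^{l}_{j}f\|_{1},
\end{equation*}
handle the second sum by part (ii) (which gives a rapidly convergent geometric series in $l$ that is dominated by $\nu^{-Q(Q+\theta\beta/2)j}$, and in particular by any power of $\nu^j$ we want), and for the first term apply Young's inequality $\|T_{j}f\|_{1}\leq \|K_{m_{j}}\|_{1}\|f\|_{1}$. Homogeneity gives $\|K_{m_{j}}\|_{1}=\|K_{m^{j}}\|_{1}$, and the second assertion of \eqref{pe-2} bounds the latter by $\|m^{j}\|_{L^{2}_{s}}$ for any $s>Q/2$. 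Using the hypothesis $\|m^{j}\|_{L^{2}_{s}}\lesssim \nu^{j\theta(s-\beta/2)}$ and taking $s=Q/2+\delta$ with $\delta$ small yields $\|T_{j}f\|_{1}\lesssim \nu^{j(-\theta\beta/2+\theta Q/2+\varepsilon)}\|f\|_{1}$ for any prescribed $\varepsilon>0$, as required.

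The proof is essentially routine given Lemma~\ref{l2l2} and the kernel estimates, so there is no serious obstacle. The only delicate point, which is shared with Lemma~\ref{l2l2}, is the careful choice of the regularity index $s$: it must be taken simultaneously large enough to extract the factors $\nu^{-Q(Q+\theta\beta/2)(j+l)}$ in part (ii), and as close to $Q/2$ as convenient in part (i) so that the exponent in the resulting bound is no worse than $-\theta\beta/2+\theta Q/2+\varepsilon$; this flexibility is exactly what the $\mathscr{M}(\theta,\beta)$ hypothesis (valid for all $s\in\mathbb{N}$) is tailored to provide.
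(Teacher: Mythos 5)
Your proposal matches the paper's argument essentially step for step: part (ii) is obtained exactly as in the $L^2$ computation preceding Lemma~\ref{l2l2} via Young's inequality and the $L^1$ bound on the cut-off kernel $g$, and part (i) follows from the identity $\sum_{l\leq j\epsilon}T^l_j=T_j-\sum_{l>j\epsilon}T^l_j$, the bound $\|K_{m_j}\|_1=\|K_{m^j}\|_1\lesssim\|m^j\|_{L^2_s}$ for $s>Q/2$ from \eqref{pe-2}, the $\mathscr{M}(\theta,\beta)$ hypothesis, and summing the geometric tail from part (ii). The only implicit point in both treatments is that the Sobolev hypothesis, stated for $s\in\mathbb{N}$, must be extended to non-integer $s$ near $Q/2$ (by interpolation), which is routine.
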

Finally, we need $L^1-L^{\infty}$ estimates for the operators $T^{l}_{j}.$ In order to do that we first need to prove pointwise estimates for the kernel $K_{h}$ of the operator $h(\sqrt{\mathcal{L}})$ where $h$ is supported on $[R/4, R]$ for some $R>0.$ Let $p_t$ denote the convolution kernel associated with the heat operator $e^{-t\mathcal{L}}.$ Recall the following Gaussian estimate (see e.g. \cite{heat})
\begin{equation}
\label{gauss}
 |p_{t}(x)|\leq \frac{C}{t^{Q/2}} e^{-\frac{|x|^2}{c\,t}}.   
\end{equation}
We just sketch the proof, see \cite{BuiIMRN} for details. Denote $H(\lambda)=e^{\frac{\lambda^2}{R^2}}h(\lambda),$ then $h(\lambda)=e^{-\frac{\lambda^2}{R^2}}H(\lambda).$ Also, $\|H_{R}\|_{2}\simeq \|h_{R}\|_2$ as $h$ is supported on $[R/4, R].$ Therefore, 
$K_{h}(y^{-1}x)=\int_{G} p_{\frac{1}{R^2}}(z^{-1} x) K_{H}(y^{-1}z)\, dz.$ H\"older's inequality and \eqref{pe-1} implies
\begin{align}
\label{interpol-1}
 |K_{h}(y^{-1}x)|\leq \|p_{1/R^2}(z^{-1}x)\|_{L^2(dz, G)}\|K_{H}(y^{-1}z)\|_{L^2(dz, G)}\lesssim R^{Q} \|H_{R}\|_{2}\lesssim R^{Q} \|h_{R}\|_{2}.
\end{align}
Observe that a factor of $R^{Q/2}$ appears from \eqref{pe-1} and another from $R^{Q/2}$  from $\|p_{1/R^2}(z^{-1}x)\|_{L^2(dz, G)}.$ Using Fourier inversion, we can write \begin{align}
\label{kernelRep}
K_{h}(y^{-1}x)=\frac{1}{2\pi}\int_{\mathbb{R}} \widehat{G}(t)p_{(1-it)/R^2}(y^{-1}x)\, dt,
\end{align}
where $G(\lambda):=h(R\sqrt{\lambda})e^{\lambda}.$ Note that $supp (G)$ is contained $[0, 1]$ due to the support condition on $h$, so $e^\lambda$ and its derivatives are always bounded. At this point we use the following estimate from \cite{Ouha}
$$|p_{(1-it)/R^2}(y^{-1}x)|\leq C R^{Q}e^{-\frac{R^2 |y^{-1}x|^2}{(1+t^2)}} \leq C R^{Q} (1+R|y^{-1}x|)^{-s}(1+|t|)^s.$$
Therefore, from the above bound with \eqref{kernelRep}, we have for any $s>0$
\begin{align}
 \nonumber |K_{h}(y^{-1}x)|  \leq C R^{Q}(1+R|y^{-1}x|)^{-s}\int |\widehat{G}(t)|(1+|t|)^{s}\, dt &\lesssim R^{Q}(1+R|y^{-1}x|)^{-s} \|G\|_{L^{2}_{s+\varkappa+\frac{1}{2}}}\\
 &\lesssim R^{Q}(1+R|y^{-1}x|)^{-s} \|h_{R}\|_{L^{2}_{s+\varkappa+\frac{1}{2}}},\label{interpol-2}
\end{align}
for any small $\kappa>0.$ Using complex interpolation of \eqref{interpol-1} and \eqref{interpol-2}, as in \cite{Duong02}, we remove the extra exponent $1/2$ in the Sobolev exponent to obtain 
\begin{align}
\label{pointwise}
|K_{h}(y^{-1}x)|\lesssim R^{Q}(1+R|y^{-1}x|)^{-s} \|h_{R}\|_{L^{2}_{s+\varkappa}}   
\end{align}
for any $s>0$ and any arbitrarily small $\varkappa>0.$ Now we prove the following lemma regarding $L^1-L^{\infty}$ estimates for $T^{l}_{j}.$
\begin{lemma}
\label{linf}
\begin{enumerate}[i)]
    \item 
 \label{linf-small}
 For $l> j\epsilon$ 
$$\|T^{l}_{j}f\|_{L^{\infty}}\lesssim_{\epsilon} \nu^{-Q(Q+\frac{\theta\beta}{2})(j+l)} \|f\|_{1}.$$
 \item We also have
 \label{linf-big}
$$\|\sum_{l\leq j\epsilon}T^{l}_{j}f\|_{\infty}\leq \nu^{jQ} \nu^{-\frac{j\theta Q}{2}} \|f\|_{1}.$$
\end{enumerate}
\end{lemma}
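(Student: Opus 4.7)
My plan hinges on the pointwise kernel estimate \eqref{pointwise} proved in the preceding discussion. Since $m_j$ is supported in $[\nu^{j-1},\nu^{j+1}]$, I apply \eqref{pointwise} with $R=\nu^j$, observing that $(m_j)_R(t)=m(\nu^j t)\phi(t)=m^j(t)$, which yields the master pointwise bound
\begin{equation*}
|K_{m_j}(x)|\;\lesssim\;\nu^{jQ}(1+\nu^j|x|)^{-s}\,\|m^j\|_{L^2_{s+\varkappa}}\;\lesssim\;\nu^{jQ}(1+\nu^j|x|)^{-s}\,\nu^{j\theta(s+\varkappa-\beta/2)}
\end{equation*}
for every integer $s\geq 0$ and arbitrarily small $\varkappa>0$; the last step uses the second hypothesis in the definition of $\mathscr{M}(\theta,\beta)$, with the small Sobolev loss $\varkappa$ handled by interpolating between consecutive integer orders.

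For part (i), note that on the support of $\phi(\nu^{-l+j(1-\theta)}|\cdot|)$ we have $|x|\sim \nu^{l+j(\theta-1)}$, hence $\nu^j|x|\sim \nu^{l+j\theta}$ is large because $l>j\epsilon>0$ and $\theta>0$. Using $\|T^l_j f\|_\infty\leq \|f\|_1\sup_x|K_{m_j}(x)\phi(\nu^{-l+j(1-\theta)}|x|)|$ and substituting the master bound produces the exponent $jQ-sl-j\theta\beta/2+j\theta\varkappa$. I would then split $-sl=-sl/2-sl/2$, replace one half using $l>j\epsilon$ by $-sj\epsilon/2$, and pick $s=s(\epsilon,Q,\theta,\beta)$ large enough so that each of the resulting decay factors $\nu^{-sj\epsilon/2}$ and $\nu^{-sl/2}$ separately exceeds the rate $Q(Q+\theta\beta/2)$ while also dominating the normalization $\nu^{jQ}$ and the parasitic $\nu^{j\theta\varkappa}$. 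This yields the claimed $\nu^{-Q(Q+\theta\beta/2)(j+l)}\|f\|_1$ bound.

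For part (ii), I would write $\sum_{l\leq j\epsilon}T^l_j=T_j-\sum_{l>j\epsilon}T^l_j$ and estimate the two pieces separately: the subtracted tail has geometrically summable $L^1\to L^\infty$ norm by part (i), while for $T_j$ itself the crude bound $\|T_j f\|_\infty\leq \|K_{m_j}\|_\infty\|f\|_1$ together with the master estimate at $s=0$ gives $\|K_{m_j}\|_\infty\lesssim \nu^{jQ}\nu^{-j\theta\beta/2+j\theta\varkappa}$, which is at least as strong as the coarse statement recorded in the lemma (a uniform bound that does not optimize in $\beta$ and is still enough for the subsequent interpolation, cf.\ Remark~\ref{Rmk}). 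The main obstacle I anticipate is purely bookkeeping in part (i): a single value of $s$ must simultaneously absorb the normalization $\nu^{jQ}$ and produce the very strong joint decay $\nu^{-Q(Q+\theta\beta/2)(j+l)}$ in two independent indices, the decoupling being made possible precisely by the restriction $l>j\epsilon$.
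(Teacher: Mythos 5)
Your part (i) is correct and is essentially the paper's own argument: both apply the pointwise kernel bound \eqref{pointwise} to $K_{m_j}$ at dyadic scale (you take $R=\nu^j$ directly; the paper rescales to $K_{m^j}$ at unit scale via $K_{m_j}(x)=\nu^{jQ}K_{m^j}(\delta_{\nu^j}x)$ --- the same calculation), use the support localization $|x|\sim\nu^{l+j(\theta-1)}$ to arrive at the exponent $jQ-sl-\frac{j\theta\beta}{2}+j\theta\varkappa$, and then exploit $l>j\epsilon$ to choose $s$ large enough to absorb everything into $\nu^{-Q(Q+\frac{\theta\beta}{2})(j+l)}$.

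For part (ii) your approach diverges from the paper's, in two respects. First, the subtraction $\sum_{l\leq j\epsilon}T^l_j=T_j-\sum_{l>j\epsilon}T^l_j$ is a workable but unnecessary detour: the paper simply uses that $\sum_{l\leq j\epsilon}\phi(\nu^{-l+j(1-\theta)}|\cdot|)\leq 1$ pointwise (it is a truncation of a partition of unity), so that
\begin{equation*}
\Big|\sum_{l\leq j\epsilon}T^l_j f(x)\Big|\leq\int|f(z)|\,|K_{m_j}(z^{-1}x)|\,dz\leq\|K_{m_j}\|_{\infty}\|f\|_1
\end{equation*}
with no tail to control. Second, by invoking the pointwise estimate \eqref{pointwise} at $s=0$ you needlessly pay the Sobolev loss $\nu^{j\theta\varkappa}$; the paper instead uses the plain $L^2$ bound \eqref{interpol-1}, which gives $\|K_{m_j}\|_\infty\lesssim\nu^{jQ}\|m^j\|_2\lesssim\nu^{jQ}\|m^j\|_\infty\lesssim\nu^{jQ}\nu^{-j\theta\beta/2}$ with no parasitic factor.

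Finally, a caution on your justification for part (ii). You assert that $\nu^{jQ}\nu^{-j\theta\beta/2+j\theta\varkappa}$ is ``at least as strong as'' the displayed bound $\nu^{jQ}\nu^{-j\theta Q/2}$. That inequality requires $\beta\geq Q+2\varkappa$; when $\beta<Q$ it fails. In fact the paper's own proof of part (ii) also produces $\nu^{jQ}\nu^{-j\theta\beta/2}$ (via $\nu^{jQ}\|m^j\|_\infty$ and the hypothesis in the definition of $\mathscr{M}(\theta,\beta)$), so the exponent $-\frac{j\theta Q}{2}$ in the statement of the lemma appears to be a misprint for $-\frac{j\theta\beta}{2}$. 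Under that corrected reading your computation yields the right bound up to the avoidable $\varkappa$ loss; but as written, your ``at least as strong'' comparison is not justified and should not be relied on.
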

\begin{proof}
 Let $l>j\epsilon.$ Then we have
 \begin{align*}
 |T^{l}_{j}f(x)|&\leq \left(\sup_{\{y: |y^{-1}x|\simeq \nu^{l-j(1-\theta)}\}}|K_{m_j}(y^{-1}x)|\right)\int |f(y)|\, dy\\
 &\leq \left(\sup_{\{y: |y^{-1}x|\simeq \nu^{l-j(1-\theta)}\}}|\nu^{jQ}K_{m^j}(\delta_{\nu^j}(y^{-1}x))|\right)\|f\|_{1}\\
 &\leq \nu^{jQ} \left(\sup_{\{y: |y^{-1}x|\simeq \nu^{l+j\theta}\}}|K_{m^j}((y^{-1}x))|\right)\|f\|_{1}\\
 &\leq \nu^{jQ} \left(\sup_{\{y: |y^{-1}x|\simeq \nu^{l+j\theta}\}}(1+|y^{-1}x|)^{-s} \|m^{j}\|_{L^{2}_{s+\varkappa}}\right)\|f\|_{1}\ \ (\text{using} \eqref{pointwise})\\
 &\lesssim \nu^{jQ} \nu^{-s(l+j\theta)} \nu^{j\theta(s+\varkappa-\frac{\beta}{2})}\|f\|_{1},
 \end{align*}    
 for any $s>Q$ and a fixed small $\varkappa>0$ from \eqref{pointwise}. Since, $l> j\epsilon,$ we may choose $s$ sufficiently large, as done in the proof of Lemma~\ref{l2l2},  depending on $\epsilon, \varkappa$ such that
 \begin{align*}
\|T^{l}_{j}f\|_{L^{\infty}}\lesssim_{\epsilon} \nu^{-Q(Q+\frac{\theta\beta}{2})(j+l)} \|f\|_{1}.  
 \end{align*}
For the second part, observe that 
\begin{align*}
\big |\sum_{l\leq j\epsilon}T^{l}_{j}f(x)\big|=&\big| \int f(z) K_{m_{j}}(z^{-1}x) \sum_{l\leq j\epsilon} \phi(\nu^{-l+j(1-\theta)}|z^{-1}x|)\big|\\
&\leq \int |f(z)| |K_{m_{j}}(z^{-1}x)|\, dz\\
&\overset{\text{using}\,\,\eqref{interpol-1}}\lesssim\nu^{jQ}\|m^j\|_{L^{\infty}}\|f\|_{1}\lesssim \nu^{jQ} \nu^{-\frac{j\theta Q}{2}} \|f\|_{1},    
\end{align*}
completing the proof.
 \end{proof} 

\begin{lemma}[$L^{r_1}-L^{r_1}$ estimates]
\label{lsls}
 Let $1\leq r_1\leq 2.$ Then interpolating Lemma~\ref{l1l1} and Lemma~\ref{l2l2} we obtain the following:
 \begin{enumerate}[i)]
     \item 
For any $\varepsilon>0$     
\begin{align}
 \label{ls-1}
 \|\sum_{l\leq j\epsilon} T^{l}_{j}f\|_{r_1}\lesssim_{\epsilon}\nu^{-\frac{j\theta\beta}{2}} \nu^{j(\frac{\theta Q}{2}+\varepsilon)(\frac{2}{r_1}-1)}\|f\|_{r_1}.
 \end{align}     
 \item
 For any $l>j\epsilon$
 \begin{align}
 \label{ls-2}
 \|T^{l}_{j}f\|_{r_1}\lesssim_{\epsilon} \nu^{-Q(Q+\frac{\theta\beta}{2})(j+l)}\|f\|_{r_1}.
 \end{align}
\end{enumerate}
\end{lemma}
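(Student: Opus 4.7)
The plan is to obtain both parts of Lemma~\ref{lsls} directly by applying the Riesz--Thorin interpolation theorem to the endpoint estimates already established in Lemma~\ref{l1l1} and Lemma~\ref{l2l2}. No new geometric analysis is required; the only work is the arithmetic of combining exponents correctly.

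For part (ii), I would first treat the easier case $l > j\epsilon$. Here Lemma~\ref{l1l1}(ii) and Lemma~\ref{l2l2}(i) give the same exponential bound $\nu^{-Q(Q+\frac{\theta\beta}{2})(j+l)}$ at both endpoints $r_1=1$ and $r_1=2$. Applying Riesz--Thorin to the single operator $T^l_j$ immediately yields the $L^{r_1}\to L^{r_1}$ bound with the same constant, since the interpolated bound is a geometric mean of two identical quantities.

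For part (i), I would apply Riesz--Thorin to the (finite) sum operator $S_j := \sum_{l\leq j\epsilon} T^l_j$. The endpoints are Lemma~\ref{l1l1}(i), giving operator norm $\nu^{j(-\frac{\theta\beta}{2}+\frac{\theta Q}{2}+\varepsilon)}$ on $L^1$, and Lemma~\ref{l2l2}(ii), giving operator norm $\nu^{-\frac{j\theta\beta}{2}}$ on $L^2$. Writing $\frac{1}{r_1}=(1-\alpha)+\frac{\alpha}{2}$ forces $\alpha = 2-\frac{2}{r_1}$ and $1-\alpha = \frac{2}{r_1}-1$, so Riesz--Thorin produces
\begin{equation*}
\|S_j f\|_{r_1} \lesssim_{\epsilon} \nu^{j(-\frac{\theta\beta}{2}+\frac{\theta Q}{2}+\varepsilon)(\frac{2}{r_1}-1)}\,\nu^{-\frac{j\theta\beta}{2}(2-\frac{2}{r_1})}\|f\|_{r_1}.
\end{equation*}
Collecting the coefficients of $-\frac{\theta\beta}{2}$ gives $(\frac{2}{r_1}-1)+(2-\frac{2}{r_1})=1$, so the $\beta$-part consolidates to $\nu^{-\frac{j\theta\beta}{2}}$, while the remaining factor is exactly $\nu^{j(\frac{\theta Q}{2}+\varepsilon)(\frac{2}{r_1}-1)}$. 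This is the claimed inequality \eqref{ls-1}.

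The only mild subtlety worth noting is that $S_j$ is a finite sum of kernel operators (once $f$ is compactly supported bounded, the sum truncates meaningfully) so that Riesz--Thorin applies in its standard form; the constant depending on $\varepsilon$ is inherited directly from Lemma~\ref{l1l1}(i). I expect no real obstacle here: the lemma is purely a bookkeeping exercise in interpolation, and the main content lies in the endpoint lemmas already proved.
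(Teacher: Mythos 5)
Your proposal is correct and is exactly the approach the paper intends: the lemma's statement itself announces that it follows by interpolating Lemma~\ref{l1l1} and Lemma~\ref{l2l2}, and your Riesz--Thorin bookkeeping (identical constants at both endpoints for part~(ii), and the exponent arithmetic showing the $\beta$-contributions consolidate to $\nu^{-j\theta\beta/2}$ for part~(i)) is precisely the routine computation the authors elide. The aside about the sum "truncating" is unnecessary — $S_j=\sum_{l\le j\epsilon}T^l_j$ is a single linear operator to which Riesz--Thorin applies directly once the $L^1$ and $L^2$ bounds are in hand — but it does no harm.
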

The next lemma concerns the key estimate which is required for our sparse domination estimates. 
\begin{lemma}
\label{lrls}
 Let $1\leq r_1\leq r_2\leq 2.$ Then we have the following estimates:
 \begin{enumerate}[i)]
     \item 
For $l> j\epsilon$     
\begin{align}
    \label{lrls1}
\|T^{l}_{j}f\|_{r_2}\lesssim_{\epsilon}\nu^{-\frac{j\theta\beta}{2}} \nu^{jQ\left(\frac{1}{r_1}-\frac{1}{r_2}\right)} \|f\|_{r_1}    
\end{align}     
\item
For any $\varepsilon>0$
\begin{align}
\label{lrls2}
 \|\sum_{l\leq j\epsilon} T^{l}_{j}f\|_{r_2}\lesssim_{\epsilon}\nu^{-\frac{j\theta\beta}{2}} \nu^{j(\frac{\theta Q}{2}+\varepsilon)(\frac{2}{r_2}-1)}  \nu^{jQ\left(\frac{1}{r_1}-\frac{1}{r_2}\right)}\|f\|_{r_1}.     
\end{align}
 \end{enumerate}
\end{lemma}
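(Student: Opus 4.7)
The plan is to deduce both parts by iterated Riesz--Thorin interpolation from the $L^p \to L^q$ bounds already recorded for the pieces $T^l_j$ and for the partial sum $S_j := \sum_{l \leq j\epsilon} T^l_j$. Since the stratified group $G$ carries Haar measure identified with $d$-dimensional Lebesgue measure, Riesz--Thorin applies verbatim to these convolution operators.

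For part~(i), in the regime $l > j\epsilon$, Lemmas~\ref{l1l1}(ii), \ref{l2l2}(i), and~\ref{linf}(i) provide the same rapid-decay factor $\nu^{-Q(Q+\theta\beta/2)(j+l)}$ at the three corner endpoints $(1/p,\,1/q)=(1,1)$, $(1/2,\,1/2)$, and $(1,0)$ respectively. Iterated Riesz--Thorin among these endpoints produces an $L^{r_1} \to L^{r_2}$ estimate with the same decay for every $(1/r_1, 1/r_2)$ in their convex hull, a region that contains the entire regime $1 \leq r_1 \leq r_2 \leq 2$. Since $Q \geq 1$ and $1/r_1 - 1/r_2 \leq 1/2$, the rapid-decay exponent $Q(Q+\theta\beta/2)(j+l)$ dominates $j\theta\beta/2 - jQ(1/r_1 - 1/r_2)$ for every such $l$, so~\eqref{lrls1} follows at once.

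For part~(ii), I perform the analogous three-endpoint interpolation, now applied to $S_j$, with: $A_1 := \nu^{j(-\theta\beta/2 + \theta Q/2 + \varepsilon)}$ on $L^1 \to L^1$ from Lemma~\ref{l1l1}(i), $A_2 := \nu^{-j\theta\beta/2}$ on $L^2 \to L^2$ from Lemma~\ref{l2l2}(ii), and $A_\infty := \nu^{j(Q - \theta Q/2)}$ on $L^1 \to L^\infty$ from Lemma~\ref{linf}(ii). Writing $(1/r_1,\,1/r_2) = a(1, 1) + b(1/2, 1/2) + c(1, 0)$ and solving for the weights yields
\[ a = \frac{1}{r_1} + \frac{1}{r_2} - 1, \qquad b = 2\Bigl(1 - \frac{1}{r_1}\Bigr), \qquad c = \frac{1}{r_1} - \frac{1}{r_2}, \]
and these are simultaneously nonnegative exactly when $1 \leq r_1 \leq r_2 \leq 2$; thus the hypothesis of the lemma is precisely what this interpolation scheme requires. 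By Riesz--Thorin, $\|S_j f\|_{r_2} \lesssim A_1^a A_2^b A_\infty^c \|f\|_{r_1}$, and using the identities $a + b = 1 - c$ and $a - c = 2/r_2 - 1$ the exponent collapses into the form $-j\theta\beta/2 + j(\theta Q/2)(2/r_2 - 1) + jQ(1/r_1 - 1/r_2)$, with the $\varepsilon$-dependent residual multiplied by a bounded coefficient that can be absorbed into the arbitrary $\varepsilon > 0$ in~\eqref{lrls2}.

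The main obstacle will be the final bookkeeping in~(ii): correctly combining the several summands generated by $A_1^a A_2^b A_\infty^c$ into the three-factor form of~\eqref{lrls2} and confirming that the leftover $\varepsilon$-terms lie within the allowed freedom in $\varepsilon$. Once the convex-hull weights $a, b, c$ are identified, the remainder of the argument is elementary algebra, and the non-negativity constraints on $a, b, c$ recover exactly the range $1 \leq r_1 \leq r_2 \leq 2$ stated in the lemma.
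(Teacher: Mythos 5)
Your argument for part~(i) is correct and in fact gives the sharper decay $\nu^{-Q(Q+\frac{\theta\beta}{2})(j+l)}$; discarding the off-diagonal $l$-decay then trivially recovers \eqref{lrls1}. The problem is part~(ii).

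For part~(ii), the three-corner Riesz--Thorin interpolation does \emph{not} collapse to the exponent in \eqref{lrls2}. Write out the exponent of $\nu^j$ in $A_1^a A_2^b A_\infty^c$:
\begin{align*}
a\Bigl(-\tfrac{\theta\beta}{2}+\tfrac{\theta Q}{2}+\varepsilon\Bigr)+b\Bigl(-\tfrac{\theta\beta}{2}\Bigr)+c\Bigl(Q-\tfrac{\theta Q}{2}\Bigr)
&= -\tfrac{\theta\beta}{2}(a+b)+\tfrac{\theta Q}{2}(a-c)+\varepsilon a+cQ\\
&= -\tfrac{\theta\beta}{2}(1-c)+\tfrac{\theta Q}{2}\Bigl(\tfrac{2}{r_2}-1\Bigr)+\varepsilon a+cQ.
\end{align*}
Comparing with the target exponent $-\tfrac{\theta\beta}{2}+\bigl(\tfrac{\theta Q}{2}+\varepsilon\bigr)\bigl(\tfrac{2}{r_2}-1\bigr)+Q\bigl(\tfrac{1}{r_1}-\tfrac{1}{r_2}\bigr)$, the surplus is
$c\tfrac{\theta\beta}{2}+\varepsilon\bigl(a-(\tfrac{2}{r_2}-1)\bigr)=c\bigl(\tfrac{\theta\beta}{2}+\varepsilon\bigr)$ with $c=\tfrac{1}{r_1}-\tfrac{1}{r_2}\ge 0$. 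This is \emph{not} an $\varepsilon$-residual: whenever $r_1<r_2$ it is a fixed positive multiple of $j$, so your interpolated bound is genuinely weaker than \eqref{lrls2} by a growing factor $\nu^{j(\frac{1}{r_1}-\frac{1}{r_2})\theta\beta/2}$. The conceptual reason is that the decay $\nu^{-j\theta\beta/2}$ is present at full strength only in the $L^1\to L^1$ and $L^2\to L^2$ corners; as soon as the $L^1\to L^\infty$ corner carries positive weight $c$, interpolation degrades it to $\nu^{-j\theta\beta(1-c)/2}$. The $L^p$-improving factor cannot be extracted from the $L^1\to L^\infty$ kernel bound without paying in the $\beta$-decay.

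The paper avoids this by a genuinely different, two-step argument: since $m_j=m_j\,\psi(\nu^{-j}\cdot)$ for a fattened cutoff $\psi$, one pre-composes with the spectral localizer $\psi(\nu^{-j}\sqrt{\mathcal L})$, estimates $\|f*K_{\psi(\nu^{-j}\cdot)}\|_{r_2}\lesssim \nu^{jQ(\frac{1}{r_1}-\frac{1}{r_2})}\|f\|_{r_1}$ via Young's inequality and the pointwise bound \eqref{pointwise} for $K_\psi$ (a Bernstein-type $L^p$-improving step using the compact spectral support), and only then applies the on-diagonal $L^{r_2}\to L^{r_2}$ bound \eqref{ls-1} from Lemma~\ref{lsls} to the pre-smoothed function. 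Because the on-diagonal bound already carries the full $\nu^{-j\theta\beta/2}$, nothing is lost. If you want to keep an interpolation framework, you would need to first interpolate only between $L^1\to L^1$ and $L^2\to L^2$ (both of which carry the full $\beta$-decay) to get the diagonal $L^{r_2}\to L^{r_2}$ estimate, and then obtain the $L^{r_1}\to L^{r_2}$ improving from the spectral localization rather than from kernel size, exactly as the paper does.
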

\begin{proof}
 Let us introduce a smooth cutoff function $\psi$ such that $\psi=1$ on the support of $\phi.$ Then $m_{j}=m\,\phi(\nu^{-j}\lambda)=m\,\phi(\nu^{-j}\lambda) \psi(\nu^{-j}\lambda)=m_{j}\,\psi(\nu^{-j}\lambda).$ Let $K_{\psi}$ be the kernel of $\psi(\sqrt{\La}).$ Therefore, for $l> j\epsilon,$ using Lemma~\ref{lsls} and Young's inequality, we obtain 
 \begin{align}
\label{young-1}
\|T^{l}_{j}f\|_{r_2}\lesssim_{\epsilon}\nu^{-Q(Q+\frac{\theta\beta}{2})(j+l)}\|f*K_{\psi({\nu^{-j}\cdot})}\|_{r_2}\lesssim_{\epsilon}\nu^{-Q(Q+\frac{\theta\beta}{2})(j+l)}\|f\|_{r_1} \|K_{\psi({\nu^{-j}\cdot})}\|_{t},    
 \end{align} 
 where $\frac{1}{t}=\frac{1}{r_2}+1-\frac{1}{r_1}.$ Next recall from \eqref{pointwise} and homogeneity that $$|K_{\psi(\nu^{-j}\cdot)}(x)|=\nu^{jQ}|K_{\psi}(\delta_{\nu^j}x)|\leq \nu^{jQ}\frac{C}{(1+|\delta_{\nu^j}x|)^N}\leq \nu^{jQ}\frac{C}{(1+\nu^{j}|x|)^N}$$ for any $N>0.$ 
From \eqref{young-1} and the above pointwise estimate we obtain the following
\begin{align*}
\|T^{l}_{j}f\|_{r_2}&\lesssim_{\epsilon}\nu^{-Q(Q+\frac{\theta\beta}{2})(j+l)}\|f*K_{\psi({\nu^{-j}\cdot})}\|_{r_2}\\
&\lesssim_{\epsilon}\nu^{-Q(Q+\frac{\theta\beta}{2})(j+l)}\|f\|_{r_1} \|K_{\psi({\nu^{-j}\cdot})}\|_{t}\lesssim_{\epsilon}\nu^{-Q(Q+\frac{\theta\beta}{2})(j+l)} \nu^{jQ\left(\frac{1}{r_1}-\frac{1}{r_2}\right)} \|f\|_{r_1},    
\end{align*}
where we have used that $\|K_{\psi({\nu^{-j}\cdot})}\|_{t}\lesssim \nu^{jQ(1-\frac{1}{t})}$ by choosing $N$ sufficiently large. Similarly, modifying the above arguments together with \eqref{ls-1}, we have
\begin{align*}
 \|\sum_{l\geq j\epsilon} T^{l}_{j}f\|_{r_2}\lesssim_{\epsilon}\nu^{-\frac{j\theta\beta}{2}} \nu^{j(\frac{\theta Q}{2}+\varepsilon)(\frac{2}{r_2}-1)} \|f\|_{r_1} \nu^{jQ\left(\frac{1}{r_1}-\frac{1}{r_2}\right)}
\end{align*}
for any $\varepsilon>0.$
\end{proof}
We also need the following $L^p$ improving estimate.
\begin{lemma}
\label{improv}
Let $1\leq r_1\leq 2\leq r_2\leq r_{1}'.$ Then the following estimates hold true:
\begin{enumerate}[i)]
    \item For $l> j\epsilon$
\begin{align}
\label{sum1}
\|T^{l}_{j}f\|_{r_2}\lesssim_{\epsilon} \nu^{-Q(Q+\frac{\theta\beta}{2})(j+l)} \nu^{jQ(\frac{1}{r_1}-\frac{1}{r_2})}\|f\|_{r_1}.
\end{align}
\item
We also have 
\begin{align}
\label{sum2}
\|\sum_{l\leq j\epsilon} T^{l}_{j}f\|_{r_2}\lesssim_{\epsilon} \nu^{-\frac{j\theta\beta}{2}}\nu^{jQ(\frac{1}{r_1}-\frac{1}{r_2})}\|f\|_{r_1}.    
\end{align}  
\end{enumerate}    
\end{lemma}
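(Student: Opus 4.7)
The plan for Part (i) is to mirror the Young's-inequality scheme of Lemma~\ref{lrls}(i), now in the range $r_2\in[2,\infty]$ rather than $[1,2]$. The new input needed is the $L^{r_2}\to L^{r_2}$ bound
\[
\|T^l_j g\|_{r_2}\lesssim_\epsilon \nu^{-Q(Q+\theta\beta/2)(j+l)}\|g\|_{r_2} \qquad (r_2\in[2,\infty]),
\]
which I would obtain by combining Lemma~\ref{lsls}(ii) (which handles $r_2\in[1,2]$) with a duality argument: since $T^l_j$ is a right-convolution on the unimodular group $G$, its $L^p\to L^p$ norm coincides with that of its adjoint at the conjugate exponent, and the adjoint has the same structural form (a spatial truncation of a kernel associated with a spectral multiplier of class $\mathscr{M}(\theta,\beta)$). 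With this in hand, the identity $m_j = m_j\,\psi(\nu^{-j}\cdot)$ (with $\psi\equiv 1$ on $\mathrm{supp}(\phi)$) combined with the scale separation enforced by $l>j\epsilon$ and $\theta>0$---under which the annulus on which $T^l_j$ acts, of radius $\nu^{l-j(1-\theta)}$, is much larger than the scale $\nu^{-j}$ on which $K_{\psi(\nu^{-j}\cdot)}$ is concentrated---lets us write $T^l_j f = T^l_j(f * K_{\psi(\nu^{-j}\cdot)})$. Combining with Young's inequality at exponent $1/t = 1 + 1/r_2 - 1/r_1$ and the kernel bound $\|K_{\psi(\nu^{-j}\cdot)}\|_t\lesssim \nu^{jQ(1-1/t)} = \nu^{jQ(1/r_1-1/r_2)}$ then yields \eqref{sum1}.

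For Part (ii) the same Young's scheme fails: dualising Lemma~\ref{lsls}(i) produces an $L^{r_2}\to L^{r_2}$ bound for $S_j:=\sum_{l\leq j\epsilon}T^l_j$ carrying a parasitic factor $\nu^{j(\theta Q/2+\varepsilon)(1-2/r_2)}$ that exceeds $1$ for $r_2>2$ and destroys the claim. My plan is instead to verify three corner estimates for $S_j$ and interpolate them via Riesz-Thorin. At $(1/r_1,1/r_2)=(1/2,1/2)$, Lemma~\ref{l2l2}(ii) gives $\|S_jf\|_2\lesssim \nu^{-j\theta\beta/2}\|f\|_2$. At $(1,0)$, Young's inequality combined with \eqref{interpol-1} and the sharp bound $\|m^j\|_{L^\infty}\lesssim \nu^{-j\theta\beta/2}$ coming from the definition of $\mathscr{M}(\theta,\beta)$ yields $\|S_jf\|_\infty\lesssim \|f\|_1\,\nu^{jQ}\|m^j\|_{L^\infty}\lesssim \nu^{jQ-j\theta\beta/2}\|f\|_1$. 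At $(1,1/2)$, Young's inequality together with the Plancherel identity \eqref{pe-1} gives $\|S_jf\|_2\lesssim \|f\|_1\|K_{m_j}\|_2\lesssim \nu^{jQ/2-j\theta\beta/2}\|f\|_1$, since the cumulative spatial truncation is pointwise bounded by $1$ and the multiplier $m_j$ is supported in $[\nu^{j-1},\nu^{j+1}]$. Each of these corner bounds matches the target expression $\nu^{-j\theta\beta/2+jQ(1/r_1-1/r_2)}$ exactly at the respective corner, so two successive Riesz-Thorin interpolations---first along the edge $r_1=1$ to produce $L^1\to L^q$ for $q\in[2,\infty]$, and then between that edge and the vertex $(1/2,1/2)$---propagate the bound throughout the target triangle $\{1\leq r_1\leq 2\leq r_2\leq r_1'\}$, which is exactly the range of the lemma.

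The main technical obstacle in Part (i) is justifying the factorisation $T^l_j f = T^l_j(f * K_{\psi(\nu^{-j}\cdot)})$, since spatial truncation does not commute with spectral multiplication: the identity really holds only modulo an error that must be controlled via the scale separation $\nu^{l-j(1-\theta)}\gg\nu^{-j}$, which is precisely why the hypotheses $l>j\epsilon$ and $\theta>0$ are essential (this is the same technical step already invoked in the proof of Lemma~\ref{lrls}). In Part (ii) the delicate point is that the three corner bounds match the target only when the $L^\infty$ estimate at $(1,0)$ is extracted from \eqref{interpol-1} together with the sharp $L^\infty$ bound $\|m^j\|_{L^\infty}\lesssim \nu^{-j\theta\beta/2}$ coming from the class definition; any looser approach (for instance via an $L^2_s$ Sobolev norm with $s>0$) would introduce a stray $\nu^{j\theta Q/2}$ factor that cannot be interpolated away, reproducing the same parasitic factor that disables the Young's scheme.
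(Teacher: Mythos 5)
Your proposal is correct, and it departs from the paper's own proof at a couple of points worth noting. For part (i), the paper first interpolates the $L^2\to L^2$ bound (Lemma~\ref{l2l2}) against the $L^1\to L^\infty$ bound (Lemma~\ref{linf}) to obtain an $L^{r_1}\to L^{r_1'}$ estimate \eqref{lrlr'-1}, then separately derives an $L^{r_1}\to L^2$ estimate \eqref{lrlr'-3} via the $\psi$-kernel and Young, and finally interpolates \eqref{lrlr'-1} with \eqref{lrlr'-3}. You instead observe that the $L^{r_2}\to L^{r_2}$ operator norm for $r_2\in[2,\infty]$ is available for free by duality from Lemma~\ref{lsls}(ii) (the adjoint of a right-convolution with kernel $K_{m_j}\phi_{\mathrm{trunc}}$ is again a right-convolution of the same structural form with multiplier $\overline{m}\in\mathscr{M}(\theta,\beta)$, and the spatial cutoff is invariant under $x\mapsto x^{-1}$), which together with the $\psi$-kernel Young step yields \eqref{sum1} in one shot and avoids one Riesz--Thorin interpolation. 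For part (ii), you correctly diagnose why the naive dualisation of Lemma~\ref{lsls}(i) fails (the parasitic factor $\nu^{j(\theta Q/2+\varepsilon)(1-2/r_2)}$ is $\gg 1$ for $r_2>2$), and you build the target triangle $\{1\le r_1\le 2\le r_2\le r_1'\}$ from three corners. The corners at $(1/2,1/2)$ and $(1,0)$ coincide with what the paper uses (the latter is Lemma~\ref{linf}(ii), where incidentally the intended exponent is $\nu^{-j\theta\beta/2}$, consistent with what you derive from the $L^\infty$ bound $\|m^j\|_\infty\lesssim\nu^{-j\theta\beta/2}$ in the class definition). Your third corner $(1,1/2)$, obtained directly from Young and the Plancherel identity \eqref{pe-1} using that the cumulative spatial truncation $\sum_{l\le j\epsilon}\phi(\nu^{-l+j(1-\theta)}|\cdot|)$ is pointwise bounded by $1$, is more elementary than the paper's \eqref{lrlr'-4}, which goes through the $\psi$-kernel factorisation. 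All three corners match the affine exponent $\nu^{-j\theta\beta/2+jQ(1/r_1-1/r_2)}$, and your two-stage Riesz--Thorin scheme (along the $r_1=1$ edge, then towards $(1/2,1/2)$) does sweep out the whole triangle. Finally, your flagging of the approximate nature of $T^l_j f = T^l_j(f*K_{\psi(\nu^{-j}\cdot)})$ as a technical step requiring scale separation is fair, and it is indeed the same step on which the paper's own proofs of Lemma~\ref{lrls} and Lemma~\ref{improv} implicitly rely.
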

\begin{proof}
Interpolating Lemma~\ref{l2l2} and Lemma~\ref{linf} we obtain that for any $1\leq r_1\leq 2$
\begin{align}
\label{lrlr'-1}
 &\|T^{l}_{j}f\|_{r_1'}\lesssim_{\epsilon} \nu^{-Q(Q+\frac{\theta \beta}{2})(j+l)}\|f\|_{r_1}\ \ \text{for}\ \ l\geq j\epsilon,\\
 \text{and}\ \ \ &\|\sum_{l\leq j\epsilon}T^{l}_{j}f\|_{r_1'}\lesssim_{\epsilon} \nu^{-\frac{j\theta Q}{2}}\nu^{jQ(1-\frac{2}{r'_1})}\|f\|_{r_1}. 
 \label{lrlr'-2}
\end{align}
Recall from the previous lemma that $m_{j}=m_{j}\psi(\nu^{-j}\lambda)$ where $\psi$ is introduced in Lemma~\ref{lrls}. Also recall \begin{align}
\label{pt-req}
|K_{\psi(\nu^{-j}\cdot)}(x)|\leq \nu^{jQ}\frac{C}{(1+\nu^j|x|)^N}\ \  \text{for any}\ \  N>0.
\end{align}
For $l> j\epsilon,$ employing Young's inequality we obtain
\begin{align*}
 \|T^l_{j}f\|_{2}\lesssim_{\epsilon} \nu^{-Q(Q+\frac{\theta\beta}{2})(j+l)}\|f*K_{\psi(\nu^{-j}\cdot)}\|_{2}\leq \nu^{-Q(Q+\frac{\theta\beta}{2})(j+l)} \|K_{\psi(\nu^{-j}\cdot)}\|_{t}\|f\|_{r_1},  
\end{align*}
where $\frac{1}{t}=\frac{1}{2}+\frac{1}{r_1'}.$ Therefore, combining the above with \eqref{pt-req} yields the following for $l> j\epsilon$ 
\begin{align}
\|T^{l}_{j}f\|_{2}\lesssim_{\epsilon} \nu^{-Q(Q+\frac{\theta\beta}{2})(j+l)} \nu^{jQ(\frac{1}{r_1}-\frac{1}{2})}\|f\|_{r_1}. 
 \label{lrlr'-3}
\end{align}
Arguing similarly we obtain
\begin{align}
\label{lrlr'-4}
\|\sum_{l\leq j\epsilon} T^{l}_{j}f\|_{2}\lesssim_{\epsilon} \nu^{-\frac{j\theta\beta}{2}}\nu^{jQ(\frac{1}{r_1}-\frac{1}{2})}\|f\|_{r_1}.  
\end{align}
Interpolating \eqref{lrlr'-1} and \eqref{lrlr'-3} we obtain that
\begin{align*}
&\|T^{l}_{j}f\|_{r_2}\lesssim_{\epsilon} \nu^{-Q(Q+\frac{\theta\beta}{2})(j+l)} \nu^{jQ(\frac{1}{r_1}-\frac{1}{r_2})}\|f\|_{r_1}\ \ \ \ \text{holds for all}\ \ \ l> j\epsilon. 
\end{align*}
Similarly, interpolating \eqref{lrlr'-2} and \eqref{lrlr'-4}, we get
\begin{align*}
\|\sum_{l\leq j\epsilon} T^{l}_{j}f\|_{r_2}\lesssim_{\epsilon} \nu^{-\frac{j\theta\beta}{2}}\nu^{jQ(\frac{1}{r_1}-\frac{1}{r_2})}\|f\|_{r_1}.
\end{align*}
This completes the proof.
\end{proof}
Now we are in a position to prove our main Theorem~\ref{mainthm-2} for the case $\theta>0.$ After having the key unweighted estimates, the proof of sparse domination is now quite standard and we provide a brief sketch, for more details we refer to \cite{BC}. 
\begin{proof}[Proof of Theorem~\ref{mainthm-1}]
 Recall the dyadic families $\mathscr{D}^n$ for $n=1, \cdots, \mathfrak{N}$ and $\mathscr{D}^n=\cup_{k\in \mathbb{Z}} \mathscr{D}^{n}_{k}.$ Let us define the operators 
 \begin{align*}
 & T^{l}_{j, n}f:=\sum_{\substack{R\in \mathscr{D}^n:\\ R\in \mathscr{D}^n_{\lfloor j(1-\theta)-j\epsilon\rfloor}}} T^{l}_{j}(f\chi_{c_{G}B(R)}) \ \ \text{for}\ \ l\leq j\epsilon,\\
 &  T^{l}_{j, n}f:=\sum_{\substack{R\in \mathscr{D}^n:\\ R\in \mathscr{D}^n_{\lfloor j(1-\theta)-l\rfloor}}} T^{l}_{j}(f\chi_{c_{G}B(R)}) \ \ \text{for}\ \ l> j\epsilon,
 \end{align*}
 where the universal constant $c_{G}$ is chosen sufficiently small(by rescaling the metric) to ensure that the support of $T^{l}_{j}(f\chi_{c_{G}B(R)})$ is contained in $R.$ Therefore, it is enough to obtain sparse domination for $$\mathcal{T}_{n}:=\sum\limits_{\substack{j\geq 0\\l\in \mathbb{Z}}} T^{l}_{j, n},\,\, \text{for}\ \  n=1, \cdots, \mathfrak{N}.$$ Hence, we only prove sparse domination for one of the $\mathcal{T}_n$ and suppress the index $n$ for simplicity. By localisation and Lemma~\ref{lrls}, we obtain for $1\leq r_1\leq r_2\leq 2$
 \begin{align}
 \nonumber\big|\Ln \sum_{l\leq j\epsilon} T^{l}_{j}f, g\Rn\big|&\lesssim \sum_{j\geq 0} \sum_{\substack{R\in \mathscr{D}:\\ R\in \mathscr{D}_{\lfloor j(1-\theta)-j\epsilon\rfloor}}} \big\|\big(\sum_{l\leq j\epsilon} T^{l}_{j}(f\chi_{c_{G}B(R)})\big) \chi_{R} \big\|_{r_{2}}\|g \chi_{R}\|_{r'_2} \\
 \nonumber& \overset{\eqref{lrls2}}\lesssim_{\epsilon} \sum_{j\geq 0} \sum_{\substack{R\in \mathscr{D}:\\ R\in \mathscr{D}_{\lfloor j(1-\theta)-j\epsilon\rfloor}}} \nu^{-\frac{j\theta\beta}{2}} \nu^{j(\frac{\theta Q}{2}+\varepsilon)(\frac{2}{r_2}-1)}  \nu^{jQ\left(\frac{1}{r_1}-\frac{1}{r_2}\right)}\|f \chi_{R}\|_{r_1} \|g \chi_{R}\|_{r'_2}\\
 \nonumber& \lesssim_{\epsilon} \sum_{j\geq 0} \sum_{\substack{R\in \mathscr{D}:\\ R\in \mathscr{D}_{\lfloor j(1-\theta)-j\epsilon\rfloor}}} |R|^{\frac{1}{r_1}+\frac{1}{r'_2}-1}\nu^{-\frac{j\theta\beta}{2}} \nu^{j(\frac{\theta Q}{2}+\varepsilon)(\frac{2}{r_2}-1)}  \nu^{jQ\left(\frac{1}{r_1}-\frac{1}{r_2}\right)}|R|\Ln f \Rn_{r_1, R} \Ln g\Rn_{r'_2, R}\\
 \nonumber& \lesssim_{\epsilon} \sum_{j\geq 0} \sum_{\substack{R\in \mathscr{D}:\\ R\in \mathscr{D}_{\lfloor j(1-\theta)-j\epsilon\rfloor}}} \nu^{Q(j\epsilon-j(1-\theta))\big(\frac{1}{r_1}+\frac{1}{r'_2}-1\big)}\nu^{-\frac{j\theta\beta}{2}} \nu^{j(\frac{\theta Q}{2}+\varepsilon)(\frac{2}{r_2}-1)}  \nu^{jQ\left(\frac{1}{r_1}-\frac{1}{r_2}\right)} |R|\Ln f \Rn_{r_1, R} \Ln g\Rn_{r'_2, R}\\
 & \lesssim_{\epsilon} \sum_{j\geq 0} \sum_{\substack{R\in \mathscr{D}:\\ R\in \mathscr{D}_{\lfloor j(1-\theta)-j\epsilon\rfloor}}}\nu^{j\big(\theta Q(\frac{1}{r_1}-\frac{1}{r_2})-\frac{\theta\beta}{2}+\frac{\theta Q}{2}\big(\frac{2}{r_2}-1\big)+\varepsilon\big(\frac{2}{r_2}-1\big)+\epsilon Q(\frac{1}{r_1}-\frac{1}{r_2})\big)} |R| \Ln f \Rn_{r_1, R} \Ln g\Rn_{r'_2, R},
 \label{sparse-1}
 \end{align}
 since $\epsilon$ and $\varepsilon$ are sufficiently small the above gives a geometrically decaying sparse collection if 
 $$\textstyle{\theta Q\big(\frac{1}{r_1}-\frac{1}{r_2}\big)-\frac{\theta\beta}{2}+\frac{\theta Q}{2}\big(\frac{2}{r_2}-1\big)<0\iff \frac{1}{r_1}-\frac{1}{2}<\frac{\beta}{2Q}.}$$

 A similar argument for $T^{l}_{j}, \ l> j\epsilon,$ with Remark~\ref{Rmk}, yields the following
 \begin{align}
 &\nonumber\big|\Ln \sum_{j\geq 0}\sum_{l>j\epsilon} T^{l}_{j}(f\chi_{c_{G}B(R)}), g\Rn\big|\\
 &\lesssim \sum_{j\geq 0} \sum_{l>j\epsilon}\sum_{\substack{R\in \mathscr{D}:\\ R\in \mathscr{D}_{\lfloor j(1-\theta)-l\rfloor}}} \| T^{l}_{j}(f\chi_{c_{G}B(R)}) \chi_{R} \|_{r_{2}}\|g \chi_{R}\|_{r'_2}\\
 \nonumber & \overset{\eqref{lrls1}}\lesssim \sum_{j\geq 0} \sum_{l>j\epsilon}\sum_{\substack{R\in \mathscr{D}:\\ R\in \mathscr{D}_{\lfloor j(1-\theta)-l\rfloor}}} \nu^{Q(l-j(1-\theta))\big(\frac{1}{r_1}-\frac{1}{r_2}\big)} \nu^{-c_{\theta}Q(Q+\frac{\theta\beta}{2})(j+l)} \nu^{jQ\left(\frac{1}{r_1}-\frac{1}{r_2}\right)} |R| \Ln f \Rn_{r_1, R} \Ln g\Rn_{r'_2, R}\\
 &\lesssim \sum_{j\geq 0} \nu^{jQ\big(\theta\big(\frac{1}{r_1}-\frac{1}{r_2}\big)-c_{\theta}(Q+\frac{\theta\beta}{2})\big)}\sum_{l>j\epsilon} \nu^{Ql\big(\big(\frac{1}{r_1}-\frac{1}{r_2}\big)-(Q+\frac{\theta\beta}{2})\big)} \sum_{\substack{R\in \mathscr{D}:\\ R\in \mathscr{D}_{\lfloor j(1-\theta)-l\rfloor}}} |R| \Ln f \Rn_{r_1, R} \Ln g\Rn_{r'_2, R}.
 \label{sparse-2}
 \end{align}
 As mentioned in Remark~\ref{Rmk} the constant $c_{\theta}$ can be chosen sufficiently large, hence, we can always ensure that $\big(\theta \big(\frac{1}{r_1}-\frac{1}{r_2}\big)-c_{\theta}(Q+\frac{\theta\beta}{2})\big)<0$ and $\big(\big(\frac{1}{r_1}-\frac{1}{r_2}\big)-(Q+\frac{\theta\beta}{2})\big)<0.$ Therefore, we again obtain geometrically decaying  $(r_1, r'_2)$ sparse domination. Therefore, combining \eqref{sparse-1} and \eqref{sparse-2}, we obtain $(r_1, r_2')$ sparse domination for $1\leq r_1\leq r_2\leq 2$ provided $\frac{1}{r_1}-\frac{1}{2}<\frac{\beta}{2Q}.$

 Arguing similarly in the case $1\leq r_1\leq 2\leq r_2\leq r_{1}'$ with Lemma~\ref{improv} yields
 \begin{align}
 \nonumber\big|\Ln \sum_{l\leq j\epsilon} T^{l}_{j}(f\chi_{c_{G}B(R)}), g\Rn\big|&\lesssim \sum_{j\geq 0} \sum_{\substack{R\in \mathscr{D}:\\ R\in \mathscr{D}_{\lfloor j(1-\theta)-j\epsilon\rfloor}}} \|\left(\sum_{{l\leq j\epsilon}} T^{l}_{j}(f\chi_{c_{G}B(R)})\right) \chi_{R} \|_{r_{2}}\|g \chi_{R}\|_{r'_2} \\
 \nonumber& \overset{\eqref{sum2}}\lesssim_{\epsilon} \sum_{j\geq 0} \sum_{\substack{R\in \mathscr{D}:\\ R\in \mathscr{D}_{\lfloor j(1-\theta)-j\epsilon\rfloor}}} \nu^{Q(j\epsilon-j(1-\theta))\big(\frac{1}{r_1}-\frac{1}{r_2}\big)}\nu^{-\frac{j\theta\beta}{2}}\nu^{jQ(\frac{1}{r_1}-\frac{1}{r_2})} |R|\Ln f \Rn_{r_1, R} \Ln g\Rn_{r'_2, R}\\
 & \lesssim_{\epsilon} \sum_{j\geq 0} \sum_{\substack{R\in \mathscr{D}:\\ R\in \mathscr{D}_{\lfloor j(1-\theta)-j\epsilon\rfloor}}}\nu^{j\big(-Q(1-\theta)\big(\frac{1}{r_1}-\frac{1}{r_2}\big)-\frac{\theta\beta}{2}+Q\big(\frac{1}{r_1}-\frac{1}{r_2}\big)+\varepsilon Q\big)} |R| \Ln f \Rn_{r_1, R} \Ln g\Rn_{r'_2, R},
 \label{sparse-3}
 \end{align}
 since $\epsilon>0$ is sufficiently small we have a geometrically decaying $(r_1, r'_2)$ sparse domination for $1\leq r_1\leq 2\leq r_2\leq r_{1}'$ if 
 $$-Q(1-\theta)\big(\frac{1}{r_1}-\frac{1}{r_2}\big)-\frac{\theta\beta}{2}+Q\big(\frac{1}{r_1}-\frac{1}{r_2}\big)<0\iff \frac{1}{r_1}-\frac{1}{r_2}<\frac{\beta}{2Q}.$$
 A similar argument also produces $(r_1, r'_2)$ sparse domination for the pieces $T^{l}_{j}, l\geq j\epsilon,$ in the range $1\leq r_1\leq 2\leq r_2\leq r_{1}'.$ Our proof only produces geometrically decaying sparse domination since in the dyadic scale $\mathscr{D}_{\lfloor j(1-\theta)-j\epsilon\rfloor}$ cubes are disjoint, however, to obtain a true sparse bound a similar argument can be produced as in \cite{BC}, see also \cite{LM}. Also, the operators $m(\sqrt{\La})$ are self-adjoint, therefore $(r_1, r'_2)$ sparse domination implies $(r_2, r'_1)$ sparse domination. This completes the proof of Theorem~\ref{mainthm-1} for the case $\theta>0.$ 
\end{proof}

\begin{remark}
Let $\theta<0$ and $m\in \mathscr{M}(\theta, \beta).$ The case $\theta<0$ represents low frequencies, hence we need to decompose the multiplier as done in \cite{BC, CW}. Therefore, $$m(\lambda) = \sum_{j: j \le 0} m_j (\lambda),$$
where $m_{j}=m(\lambda)\phi(\nu^{-j}\lambda).$ We can rewrite the above as $m(\lambda) = \sum_{k: k \geq 0} m_{-k} (\lambda), $ where $m_{-k}=m(\lambda)\phi(\nu^{k}\lambda)$ for $k\geq 0.$ Also $m^{-k}(\lambda):=m(\nu^{-k}\lambda)\phi(\lambda).$ Then the facts that $\|m^{-k}\|_{L^{\infty}}\leq C\nu^{k\theta\beta/2},$ and $\|m^{-k}\|_{L^2_{s}}\leq C \nu^{-k\theta(2s-\beta)/2}$ for all $k\geq 0$ and for $s\in \mathbb N,$ yield the following estimate as in Lemma~\ref{l2l2} by choosing $s\gg\lfloor-\frac{\theta\beta}{2\epsilon}\rfloor$
\begin{align*}
&\|T^l_{k}f\|_{2}\lesssim_{\epsilon} \nu^{Q(\frac{\theta\beta}{2}-Q)(k+l)}\|f\|_{2}\ \ \text{for} \ \ l>k\epsilon,\\ 
 \text{also,}\\
 &\|T_{k}f\|_{2}\lesssim\nu^{\frac{k\theta\beta}{2}}\ \ \text{for} \ \ k\geq 0.
\end{align*}
Now one can modify Lemma~\ref{l1l1}, and Lemma~\ref{linf} appropriately to obtain similar results in this case.
\end{remark}
\section{Applications}
\label{Appl}
\subsection{Quantitative estimates} In this subsection we obtain several weighted estimates for oscillating multipliers $m(\sqrt{\La}).$
Recall the following notion of Muckenhoupt weights on homogeneous spaces from \cite{BFP}. Let $1<p<\infty,$ $\omega\in A_p$ if
\begin{align}
[\omega]_{A_p}:=\sup_{B}\left(\frac{1}{|B|}\int_{B} \omega \right)\left(\frac{1}{|B|} \int_{B}\omega^{1-p'}\right)^{p-1}<\infty.
\end{align}
Also, we say $\omega\in RH_{q}$ for $1<q<\infty,$ if $[\omega]_{RH_{q}}:=\sup\limits_{B}\Ln\omega \Rn_{1, B}^{-1}\Ln  \omega \Rn_{q, B}<\infty.$ Corresponding to a sparse family $\mathcal{S},$ and $1\leq r_1, r_2\leq \infty,$ let $\Lambda_{\mathcal{S}, r_1, r'_{2}}$ denote the following bilinear form $$\Lambda_{\mathcal{S}, r_1, r'_{2}}(f, g):=\sum_{R\in \mathcal{S}}|R| \Ln f \Rn_{r_1, R} \Ln g \Rn_{r'_2, R}.$$ 
The following quantitative estimate was proved in \cite{BFP}. 
\begin{lemma}[\cite{BFP}]
\label{BFP}
For any $r_1<p<r_2,$ and $\omega\in A_{p/r_1}\cap RH_{(r_2/p)'},$ we have
$$\Lambda_{\mathcal{S}, r_1, r'_{2}}(f, g)\lesssim_{r_1, r_2, p, \mathcal{S}} \big([\omega]_{A_{p/r_1}}[\omega]_{RH_{(r_2/p)'}}\big)^{max\{\frac{1}{p-r_1}, \frac{r_2-1}{r_2-p}\}}\|f\|_{L^p(w)}\|g\|_{L^{p'}(w^{1-p'})}.$$
\end{lemma}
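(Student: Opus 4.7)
The plan is to combine the sparse structure with sharp quantitative weighted maximal inequalities. Set $\sigma:=\omega^{1-p'}$; the hypothesis on $\omega$ is equivalent by duality to a symmetric condition on $\sigma$, so the situation is completely symmetric between the pairs $(f,\omega,r_1)$ and $(g,\sigma,r_2')$. First I would exploit sparsity: since $\{E_R\}_{R\in\mathcal S}$ are pairwise disjoint,
\[
\Lambda_{\mathcal S,r_1,r_2'}(f,g)\;\le\;\eta^{-1}\sum_{R\in\mathcal S}\int_{E_R}\langle f\rangle_{r_1,R}\,\langle g\rangle_{r_2',R}\,dx\;\le\;\eta^{-1}\int_G\mathcal A_{r_1}^{\mathcal S}f(x)\,\mathcal A_{r_2'}^{\mathcal S}g(x)\,dx,
\]
where $\mathcal A_r^{\mathcal S}h(x):=\sup\{\langle h\rangle_{r,R}:R\in\mathcal S,\ R\ni x\}$. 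The task is now to bound this unweighted $L^1$-pairing quantitatively by $\|f\|_{L^p(\omega)}\|g\|_{L^{p'}(\sigma)}$.

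Next, Hölder's inequality with exponent $p/r_1$ combined with the $A_{p/r_1}$ testing yields, for every $R\in\mathcal S$,
\[
\langle f\rangle_{r_1,R}\;\le\;[\omega]_{A_{p/r_1}}^{1/p}\Bigl(\frac{1}{\omega(R)}\int_R|f|^p\,\omega\Bigr)^{\!1/p},
\]
and an analogous argument using $RH_{(r_2/p)'}$, dualised to $\sigma$, gives
\[
\langle g\rangle_{r_2',R}\;\le\;[\omega]_{RH_{(r_2/p)'}}^{\kappa}\Bigl(\frac{1}{\sigma(R)}\int_R|g|^{p'}\,\sigma\Bigr)^{\!1/p'},
\]
with $\kappa$ extracted from the sharp reverse Hölder transfer. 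Both right-hand sides can be majorised pointwise on $R$ by weighted Hardy--Littlewood maximal functions acting on slightly lower powers of $f$ and $g$, using the self-improvement (reverse Hölder) property of $A_\infty$ weights; such smaller-exponent maximal functions are bounded on $L^p(\omega)$ and $L^{p'}(\sigma)$ respectively, with constants depending only on the doubling of the weight and on the gap between the exponents.

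Combining these bounds, the identity $\omega^{1/p}\sigma^{1/p'}\equiv 1$, and a single Hölder inequality yields
\[
\Lambda_{\mathcal S,r_1,r_2'}(f,g)\;\lesssim\;\bigl([\omega]_{A_{p/r_1}}[\omega]_{RH_{(r_2/p)'}}\bigr)^{\alpha}\|f\|_{L^p(\omega)}\|g\|_{L^{p'}(\sigma)}
\]
for some exponent $\alpha$. The main obstacle is pinning down the sharp value $\alpha=\max\{\frac{1}{p-r_1},\frac{r_2-1}{r_2-p}\}$: the maximum reflects two distinct endpoint regimes, the $\frac{1}{p-r_1}$ cost arising from the Buckley-type sharp $A_{p/r_1}$ estimate (dominant as $p\to r_1^+$) and the $\frac{r_2-1}{r_2-p}$ cost arising from the matching sharp reverse Hölder self-improvement on the $RH_{(r_2/p)'}$ side (dominant as $p\to r_2^-$). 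Running the two sharp computations in parallel and retaining the worse constant, as carried out in \cite{BFP}, delivers the stated bound.
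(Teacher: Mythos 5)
The paper itself offers no proof of this lemma: it is quoted verbatim from Bernicot--Frey--Petermichl \cite{BFP}. So there is no ``paper's argument'' to match your sketch against; what I can do is assess whether your sketch would actually constitute a proof.

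Your intermediate per-cube estimate
\[
\langle f\rangle_{r_1,R}\;\le\;[\omega]_{A_{p/r_1}}^{1/p}\Bigl(\tfrac{1}{\omega(R)}\int_R|f|^p\,\omega\Bigr)^{1/p}
\]
is correct (H\"older with exponent $p/r_1$ plus the $A_{p/r_1}$ condition applied to $\omega^{-r_1/(p-r_1)}$), and its dual analogue for $g$ using $\sigma^{-r_2'/(p'-r_2')}=\omega^{(r_2/p)'}$ together with $RH_{(r_2/p)'}$ is the right idea; this is the same first move as in \cite{BFP}. However, the rest of the sketch has two genuine gaps. First, after the very first step you pass from the sparse sum $\sum_R|R|\langle f\rangle_{r_1,R}\langle g\rangle_{r_2',R}$ to the integral $\int_G\mathcal A_{r_1}^{\mathcal S}f\cdot\mathcal A_{r_2'}^{\mathcal S}g\,dx$, but then revert to per-cube H\"older estimates. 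Those per-cube estimates are not applied to the maximal-function pairing; what the known proofs actually do is keep the sparse sum, rewrite the weight factor as $|R|=\omega(R)^{1/p}\sigma(R)^{1/p'}\cdot\bigl(|R|/\omega(R)^{1/p}\sigma(R)^{1/p'}\bigr)$, H\"older-split the sum into $\bigl(\sum_R\omega(R)\langle f\rangle_{\cdot,R,\omega}^p\bigr)^{1/p}\bigl(\sum_R\sigma(R)\langle g\rangle_{\cdot,R,\sigma}^{p'}\bigr)^{1/p'}$, and then invoke a Carleson embedding. Second, and more seriously, the averages you produce sit at the endpoint: you get $\langle f\rangle_{p,R,\omega}$, which is $(M_\omega^{\mathcal S}(|f|^p))^{1/p}$ in disguise, and $M_\omega$ is only of weak type $(1,1)$ on $L^1(\omega)$, so the na\"ive bound $\|(M_\omega(|f|^p))^{1/p}\|_{L^p(\omega)}$ does not close. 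One must first use the sharp openness of the $A_{p/r_1}$ classes to drop to $A_{q/r_1}$ with $q$ slightly below $p$ and then run the Carleson embedding with $M_\omega(|f|^q)$ and $p/q>1$; the quantitative form of this openness (how small $p-q$ can be taken in terms of $[\omega]_{A_{p/r_1}}$), together with the universal bound $\|M_\omega\|_{L^s(\omega)\to L^s(\omega)}\lesssim s'$, is exactly what produces the exponent $\max\{\tfrac{1}{p-r_1},\tfrac{r_2-1}{r_2-p}\}$. You name this mechanism in one sentence but then defer it to \cite{BFP}. Since that mechanism \emph{is} the content of the lemma, the proposal is an accurate outline of the BFP strategy but not a self-contained proof.
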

Motivated by \cite{BC}, as a consequence of Theorem~\ref{mainthm-1}, we now prove Theorem~\ref{quantitative} concerning weighted estimates for $m(\sqrt{\La}).$
\begin{proof}[Proof of Theorem~\ref{quantitative}] Let us first prove part $(i)$ and $(ii).$ Assume $m\in \mathscr{M}(\theta, \beta)$ with $\theta\in \mathbb{R}\setminus\{0\}$ with $Q\leq\beta\leq 2Q.$ The proof follows from Theorem~\ref{mainthm-1} and reverse H\"older's property of $A_p$ weights, see \cite{HPR}. It is easy to observe from Theorem~\ref{mainthm-1} that we have $(r_1, 1)$ sparse domination for all $r_1$ such that $0<r_1<\frac{1}{p_{\beta}}.$ Let $p_{\beta}<p<\infty,$ and $\omega\in A_{p/p_{\beta}}.$ We can always choose $\varepsilon>0$ such that $\frac{1}{p}<\frac{1}{p_{\beta}}-\frac{\varepsilon}{p}.$ Denote $\frac{1}{r}:=\frac{1}{p_{\beta}}-\frac{\varepsilon}{p}.$ Moreover, reverse H\"older's inequality ensures that the quantity $\varepsilon$ can be chosen such that $\omega\in A_{\frac{p}{r}}.$ Theorem~\ref{mainthm-1} and Lemma~\ref{BFP} imply that for any compactly supported $f$ and $g,$ there exists a sparse family $\mathcal{S}$ such that $$\big|\Ln m(\sqrt{\La})f, g\Rn\big|\lesssim \Lambda_{\mathcal{S}, r, 1}(f, g)\lesssim C([\omega]_{A_{p/p_{\beta}}}) \|f\|_{L^p(w)}\|g\|_{L^{p'}(w^{1-p'})}.$$
 Now duality concludes the proof.
\medskip

Let us now prove part $(iii).$  Let $m\in \mathscr{M}(\theta, \beta)$ with $0<\beta<Q.$  Theorem~\ref{mainthm-1} implies that we have a $(2, s')$ sparse domination for all $s'$ such that $\frac{1}{2}\leq\frac{1}{s'}<\frac{1}{2}+\frac{\beta}{2Q}.$ Let $2<p<s_{\beta},$ and $\omega\in A_{p/2}\cap RH_{(s_{\beta}/p)'}.$ By self-improving property of reverse H\"older's classes, $\omega\in RH_{(s_{\beta}/p)'(1+\delta)},$ for sufficiently small $\delta>0.$ It is easy to choose $s$ such that $2<p<s<s_{\beta}$ satisfying $\frac{1}{2}<\frac{1}{s'}<\frac{1}{2}+\frac{\beta}{2Q}=\frac{1}{s'_{\beta}}$ and $\omega\in RH_{(s/p)'}$ simultaneously. Therefore, 
 $$\big|\Ln m(\sqrt{\La})f, g\Rn\big|\lesssim \Lambda_{\mathcal{S}, 2, s'}(f, g)\lesssim C([\omega]_{A_{p/2}}, [\omega]_{RH_{(s_{\beta}/p)'}}) \|f\|_{L^p(w)}\|g\|_{L^{p'}(w^{1-p'})}.$$
Now the proof follows from duality. 
\end{proof}

\subsection{Riesz means}
For $k, \alpha, t>0,$ we define the Riesz means
\begin{align}
I_{k, \alpha, t}(\La):=k t^{-k}\int_{0}^{t}(t-s)^{k-1}e^{i s (\sqrt{\La})^{\alpha}}\, ds. 
\end{align}
Without loss of generality, let us assume that $t=1$ and simply denote $I_{k, \alpha, 1}(\La)$ by $I_{k, \alpha}(\La).$ It is well-known that the operator $I_{k, \alpha}(\La)$ can be written as $\sigma((\sqrt{\La})^{\alpha}),$ where the spectral multiplier $\sigma$ can be decomposed as $\sigma(\lambda)=c_{k}\psi(\lambda) \lambda^{-k} e^{i\lambda}+\sigma_{1}(\lambda),$ where $\sigma_{1}$ is a smooth function satisfying the Mikhlin--H\"ormander condition, and $\psi$ is a $C^{\infty}$ function such that $\psi=0$ if $0\leq \lambda\leq 1$ and $\psi\equiv 1$ for $\lambda\geq 2.$ We refer to \cite{Alex, Bui1,M, M1} and references therein. As $\sigma_{1}((\sqrt{\La})^\alpha)$ always satisfy $(1, 1)$ sparse domination, the following sparse domination follows from Corollary~\ref{mainthm-2}
\begin{align*}
 |\langle I_{k,\alpha}(\La)f, g \rangle |\lesssim_{k, \alpha, r_{1}, r_{2}}\sum_{R\in \mathcal{S}} |R|\langle f \rangle_{r_1, R}\Ln g\Rn_{r_2', R}\\
 \text{and}\,\, |\langle I_{k,\alpha}(\La)f, g \rangle |\lesssim_{k, \alpha, r_{1}, r_{2}}\sum_{R\in \mathcal{S'}} |R|\langle f \rangle_{r_2', R}\Ln g\Rn_{r_1, R}, 
 \end{align*}
 where $r_1, r_2$ satisfy 
 \begin{align*}
 \ \ \ &\left(\frac{1}{r_1}-\frac{1}{2}\right)<\frac{k}{Q},\ \ \ 1\leq r_1\leq r_2\leq 2,\ \ \ \text{or}\ \ \ &\left(\frac{1}{r_1}-\frac{1}{r_2}\right)<\frac{k}{Q},\ \ \ 1\leq r_1\leq 2\leq r_2\leq r'_{1}.
 \end{align*}    
The above sparse domination and Theorem~\ref{quantitative} yield the following weighted estimates.
\begin{enumerate}[i)]
    \item 
Let $k\geq Q.$ Then $I_{k, \alpha}({\La})$ maps $L^p(\omega)$ to $L^p(\omega)$ for all $1<p<\infty$ and $\omega\in A_{p}.$ 
\item Let $\frac{Q}{2}\leq k<Q.$ Then $I_{k, \alpha}({\La})$ maps $L^p(\omega)$ to $L^p(\omega)$ for $p_{k}<p<\infty$ and $\omega\in A_{p/p_{k}},$ where $p_{k}:=\frac{Q}{k}.$
\item Let $0<k<\frac{Q}{2}.$ Then $I_{k, \alpha}({\La}): L^p(\omega)\to L^p(\omega)$ for all $2<p<s_{k},\ \  \omega\in A_{p/2}\cap RH_{(s_{k}/p)'},$ where $\frac{1}{s_{k}}:=\frac{1}{2}-\frac{k}{Q}.$
\end{enumerate}

\subsection{Dispersive equations} Let $f\in C^{\infty}_{0}(G)$ and $\alpha\in \mathbb N.$ Consider the dispersive equation 
\begin{align*}
 i\, \partial_{t} u+ (\sqrt{\La})^{\alpha}\ u=0,\,\, u(\cdot, 0)=f.    
\end{align*}
Then $u(x, t)=e^{it (\sqrt{\La})^{\alpha}}f(x, t).$ For a fixed time $t,$ rescaling the operator $\sqrt{\La}$ by $t^{1/\alpha}\sqrt{\La},$ one can prove the following as a consequence of Corollary~\ref{mainthm-2}
\begin{align*}
|\langle u(\cdot, t) , g \rangle|\lesssim_{\beta, \alpha, r_1, r_2, t} \sum_{R\in \mathcal{S}} |R|\langle (I+\sqrt{\La})^{\beta}f \rangle_{r_1, R}\Ln g\Rn_{r_2', R}\ \ 
\end{align*}
whenever $\left(\frac{1}{r_1}-\frac{1}{2}\right)<\frac{\beta}{\alpha Q}, 1\leq r_1\leq r_2\leq 2$ or $\left(\frac{1}{r_1}-\frac{1}{r_2}\right)<\frac{\beta}{\alpha Q}, 1\leq r_1\leq 2\leq r_2\leq r'_{1}.$ Let $W^{s, p}_{\omega}$ denotes the non-homogeneous weighted Sobolev space $W^{s, p}_{\omega}=\{f: \|f\|_{W^{s, p}_{\omega}}:=\|(I+\sqrt{\La})^{s}f\|_{L^p(\omega)}<\infty\}.$
As an application of Theorem~\ref{quantitative}, we can derive the following weighted estimates:
\begin{enumerate}[i)]
    \item 
 Let $1<p<\infty$ and $\omega\in A_{p}.$ Then $\|u(\cdot, t)\|_{L^p(\omega)}\lesssim \|f\|_{W^{\beta, p}_{\omega}}$ provided $\beta\geq \alpha\, Q.$   
 \item
 Let $\frac{\alpha\,Q}{2}\leq \beta<\alpha Q.$ Then $\|u(\cdot, t)\|_{L^p(\omega)}\lesssim \|f\|_{W^{\beta, p}_{\omega}}$ holds for all $p_{\alpha, \beta}<p<\infty$ and $\omega\in A_{p/p_{\alpha, \beta}},$ where $p_{\alpha, \beta}:=\frac{Q\,\alpha}{\beta}.$ 
 \item
 Finally, let $0<\beta<\frac{\alpha\,Q}{2}.$ We also have that $\|u(\cdot, t)\|_{L^p(\omega)}\lesssim \|f\|_{W^{\beta, p}_{\omega}}$ holds for all $2<p<s_{\alpha, \beta},\ \  \omega\in A_{p/2}\cap RH_{(s_{\alpha, \beta}/p)'},$ where $\frac{1}{s_{\alpha, \beta}}:=\frac{1}{2}-\frac{\beta}{\alpha Q}.$
\end{enumerate}

\end{document}